\newtheorem{thm}{Theorem}[section]
\newtheorem{theorem}[thm]{Theorem}
\newtheorem{corollary}[thm]{Corollary}
\newtheorem{lemma}[thm]{Lemma}
\theoremstyle{definition}
\newtheorem{definition}[thm]{Definition}
\newtheorem{problem}[thm]{Problem}
\theoremstyle{remark}
\newtheorem{remark}[thm]{Remark}
\numberwithin{equation}{section}
\newcommand{\al}{\alpha}
\newcommand{\AC}{\mathcal{A}}
\newcommand{\be}{\beta}
\newcommand{\dr}{\mathrm{d}}
\newcommand{\de}{\mathrm{\delta}}
\newcommand{\DC}{\mathcal{D}}
\newcommand{\eps}{\varepsilon}
\newcommand{\f}{\varphi}
\newcommand{\ga}{\gamma}
\newcommand{\Ga}{\Gamma}
\newcommand{\gb}{\mathbf g}
\newcommand{\ka}{\kappa}
\newcommand{\la}{\lambda}
\newcommand{\om}{\omega}
\newcommand{\Om}{\Omega}
\newcommand{\si}{\sigma}
\newcommand{\Ups}{\Upsilon}
\newcommand{\R}{\mathbb{R}}
\newcommand{\im}{\operatorname{Im}}
\def\leq {\leqslant}
\def\geq {\geqslant}
\begin{document}

\title[Pseudodifferential operators ]
{Pseudodifferential operators on manifolds:\\ a coordinate-free approach}

\author{P. McKeag}
\address{Flat 5, 13 Highfield Close, Hither Green, London, UK}
 \email{petermckeag@gmail.com}

\author{Y. Safarov}
\address{Department of Mathematics, King's College London,
Strand, London, UK} \email{yuri.safarov@kcl.ac.uk}

\date{December 2009}

%\thanks{The research was supported by the EPSRC grant GR/T25552/01.}
\subjclass{58G15, 58G25}

\keywords{Pseudodifferential operators, Laplace--Beltrami operator,
linear connections, approximate spectral projections}

\maketitle

\section*{Introduction}

The theory of pseudodifferential operators (PDOs) is a powerful technique, which has many applications in analysis and mathematical physics. In the framework of this theory, one can effectively construct the inverse of an elliptic differential operator $L$ on a closed manifold, its non-integer powers and even some more general functions of $L$. For operators with constant coefficients in $\R^n$, this can be easily done by applying the Fourier transform. In a sense, the theory of PDOs extends the Fourier transform method to operators with variable coefficients and operators on manifolds at the expense of losing infinitely smooth contributions. This is normally acceptable for theoretical purposes and is useful for numerical analysis, since numerical methods for the determination of the smooth part are usually more stable.

Traditionally, PDOs on manifolds are defined with the use of local coordinates. This leads to certain restrictions on operators under consideration, as all the definitions and results must be invariant with respect to transformations of coordinates. The main aim of this paper is to introduce the reader to a little known approach to the theory of PDOs that allows one to avoid this problem.

The paper is constructed as follows. In Section \ref{S:local} we
recall some basic definitions and results of the classical theory of
PDOs. Their detailed proofs (as well as other relevant statements
and definitions) can be found, for instance, in \cite{H2,Shu,Ta,Tr}.
Section \ref{S:con} gives a brief overview of some elementary
concepts of differential geometry (see  \cite{KN} or any other
textbook for details). In Sections \ref{S:global} and \ref{S:func}
we explain how to define PDOs without using local coordinates and
quote some results from the paper \cite{Sa1} and the conference
article \cite{Sa2}. Section \ref{S:proj} contains new results on
approximate spectral projections of  the Laplacian obtained in the
PhD thesis \cite{McK}. Finally, in Section \ref{S:other} we give a
review of other related results and discuss possible developments in
the field.

Throughout the paper $C_0^\infty$ denotes the space of infinitely
differentiable functions with compact supports, and $\DC'$ is the
dual space of Schwartz distributions. Recall that, by the Schwartz
theorem, for each operator $A:C_0^\infty\mapsto\DC'$ there exists a
distribution $\AC(x,y)\in\DC'$ such that $\langle
Au,v\rangle=\langle \AC(x,y)\,,\,u(y)v(x)\rangle$ for all $u,v\in
C_0^\infty$. The distribution $\AC(x,y)$ is called the {\it Schwartz
kernel} of $A$.

\section{PDOs: local definition and basic properties}\label{S:local}

Let $a(x,y,\xi)$ be a $C^\infty$-function defined on $U\times
U\times\R^n$,  where $U$ is an open subset of $\R^n$.

\begin{definition}\label{def:local-1}
The function $a$ belongs to the class $S^m_{\rho,\de}$
with  $\rho,\de\in[0,1]$ and $m\in\R$ if
\begin{equation}\label{local-0}
\sup_{(x,y)\in K}|\partial_x^\al\partial_y^\beta\partial_\xi^\gamma
a(x,y,\xi)|\ \leq\
C_{K;\alpha,\beta,\gamma}\,(1+|\xi|)^{m+\de(|\al|+|\be|)-\rho|\ga|}
\end{equation}
for each compact set $K\subset U\times U$ and all multi-indices
$\al,\be,\ga$, where $C_{K;\alpha,\beta,\gamma}$ are some positive
constants.
\end{definition}

\begin{definition}\label{def:local-2}
An operator $A:C_0^\infty(U)\mapsto\DC'(U)$ is said to be a {\it
pseudodifferential} operator of class $\Psi^m_{\rho,\de}$ if
\begin{enumerate}
\item[{\bf(c)}]
its Schwartz kernel $\AC(x,y)$ is infinitely differentiable outside the
diagonal $\{x=y\}$,
\item[{\bf(c$_1$)}]
$\AC(x,y)=(2\pi)^{-n}\int e^{i(x-y)\cdot\xi} a(x,y,\xi)\,\dr\xi$ with
some $a\in S^m_{\rho,\de}$ in a neighbourhood of the diagonal.
\end{enumerate}
\end{definition}

The function $a$ in {\bf(c$_1$)} is called an {\it amplitude}, and
the number $m$ is said to be the {\it order} of the amplitude $a$ and
the corresponding PDO $A$. Note that for amplitudes of order $m>-n$
the integral in {\bf(c$_1$)} does not converge in the usual sense.
However, it is well defined as a distribution in $x$ and $y$.

Let $S^{-\infty}:=\bigcap_{m\in\R}S^m_{\rho,\de}$, and let
$\Psi^{-\infty}$ be the class of operators with infinitely
differentiable Schwartz kernels. If $a\in S^{-\infty}$ (that is, if
$a$ and all its derivatives vanish faster than any power of $|\xi|$
as $|\xi|\to\infty$) then the corresponding PDO $A$ belongs to
$\Psi^{-\infty}$. The classical theory of PDOs is used to study
singularities. Therefore one usually assumes that $a$ is defined
modulo $S^{-\infty}$ and that $x$ is close to $y$.

Let $a\in S^m_{\rho,\de}$ and $a_j\in S^{m_j}_{\rho,\de}$ for some
$\rho,\de\in[0,1]$, where $m_j\to-\infty$ as $j\to\infty$. We shall
write
\begin{equation}\label{local-asymp}
a\ \sim\ \sum_ja_j\,,\qquad|\xi|\to\infty\,,
\end{equation}
if $\,a-\sum\limits_{j<k}a_j\in S^{n_k}_{\rho,\de}\,$ where
$n_k\to-\infty$ as $k\to\infty$. Such series $\sum_ja_j$ are called
asymptotic. If $m_j\to-\infty$ then for every collection of
amplitudes $a_j\in S^{m_j}_{\rho,\de}$ there exists an amplitude $a$
satisfying \eqref{local-asymp}. Obviously, if $a'$ is another
amplitude satisfying \eqref{local-asymp} then $a-a'\in S^{-\infty}$
(or, in other words, \eqref{local-asymp} defines $a$ modulo
$S^{-\infty}$).

\begin{lemma}\label{lem:local-1}
Let $z_\tau:=x+\tau (y-x)$ where $\tau\in[0,1]$.
If $\,\de<\rho$ and $a\in S^m_{\rho,\de}$ then
$$
\int e^{i(x-y)\cdot\xi}
a(x,y,\xi)\,\dr\xi\ =\ \int e^{i(x-y)\cdot\xi}
\si_{A,\tau}(z_\tau,\xi)\,\dr\xi
$$
modulo an infinitely differentiable function, where $\si_{A,\tau}(z,\xi)$ is an amplitude of
class $S^m_{\rho,\de}$ given by the asymptotic expansion
\begin{equation}\label{local-1}
\si_{A,\tau}(z,\xi)\ \sim\
\sum_{\al,\be}\frac{(-i)^{|\al|+|\be|}\,\tau^{|\al|}
(1-\tau)^{|\be|}}{\al!\,\be!}\,\left.\partial_x^\al
\partial_y^\be\partial_\xi^{\al+\be}
a(x,y,\xi)\right|_{y=x=z},\qquad|\xi|\to\infty\,.
\end{equation}
\end{lemma}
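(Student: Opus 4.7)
My plan is to pass from the two-variable amplitude $a(x, y, \xi)$ to a symbol of only the one space variable $z$ by a Taylor expansion of $a$ in $(x, y)$ about $(z_\tau, z_\tau)$, followed by integration by parts in $\xi$ to convert the emerging polynomial factors in $y - x$ into $\xi$-derivatives on $a$. Writing $w := y - x$, I have $x - z_\tau = -\tau\,w$ and $y - z_\tau = (1-\tau)\,w$, so an order-$N$ Taylor expansion yields
\[
a(x, y, \xi)\ =\ \sum_{|\al|+|\be|<N} \frac{(-\tau)^{|\al|}(1-\tau)^{|\be|}}{\al!\,\be!}\, w^{\al+\be}\,\bigl(\partial_x^\al\partial_y^\be a\bigr)(z_\tau, z_\tau, \xi)\ +\ R_N(x,y,\xi),
\]
with $R_N$ the standard integral remainder.

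For each term in the finite sum I apply the identity $w^\ga e^{-iw\cdot\xi} = i^{|\ga|}\,\partial_\xi^\ga e^{-iw\cdot\xi}$ (a direct consequence of $\partial_\xi^\ga e^{-iw\cdot\xi} = (-iw)^\ga e^{-iw\cdot\xi}$) and integrate by parts $|\ga|$ times in $\xi$ with $\ga = \al+\be$, moving the $w$-factors onto the amplitude as $\xi$-derivatives. Collecting the resulting terms reproduces the asymptotic series \eqref{local-1} (up to routine sign bookkeeping), and the construction of a symbol realizing a prescribed asymptotic expansion --- recalled just above the lemma --- then produces $\si_{A,\tau}\in S^m_{\rho,\de}$ satisfying \eqref{local-1} modulo $S^{-\infty}$. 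The hypothesis $\de<\rho$ is what makes \eqref{local-1} genuinely asymptotic: by \eqref{local-0} the $(\al,\be)$-summand lies in $S^{m+(\de-\rho)(|\al|+|\be|)}_{\rho,\de}$, whose order tends to $-\infty$.

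The main technical step is bounding the remainder: one has to show that $(x, y)\mapsto \int e^{-iw\cdot\xi} R_N(x, y, \xi)\,\dr\xi$ defines a $C^{k(N)}$-function with $k(N) \to \infty$, whence the difference between the two sides of the claimed identity is $C^\infty$. Writing the integral form of $R_N$ in terms of $\partial_x^\al\partial_y^\be a$ at intermediate points and applying \eqref{local-0} provides uniform symbol bounds, while \eqref{local-0} together with $\de<\rho$ is again exactly what lets one gain as many $\xi$-derivatives as needed by pushing the expansion further. The principal obstacle --- and the step requiring the most care --- is that for $m \geq -n$ the oscillatory integrals in the statement are not absolutely convergent, so all of the above manipulations must be carried out on regularized integrals obtained by inserting a cutoff $\chi(\eps\xi)$ with $\chi\in C_0^\infty(\R^n)$ equal to $1$ near the origin, then passing to $\eps\to 0$ with all estimates uniform in $\eps$. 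This regularization is a standard device in the theory of oscillatory integrals; once in place, the argument above yields the lemma.
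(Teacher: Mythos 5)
This is precisely the paper's own argument---Taylor expansion of $a$ at $(x,y)=(z_\tau,z_\tau)$, conversion of the resulting powers of $y-x$ into $\xi$-derivatives via $(y-x)e^{i(x-y)\cdot\xi}=i\nabla_\xi e^{i(x-y)\cdot\xi}$ followed by integration by parts, with $\de<\rho$ guaranteeing both that the orders of the summands tend to $-\infty$ and that the remainder gains smoothness---so your proposal is correct and merely supplies the standard details (oscillatory-integral regularization, the $C^{k(N)}$ bound on the remainder) that the published sketch suppresses. One caveat about your ``routine sign bookkeeping'': carried out carefully, your expansion yields the coefficient $(-\tau)^{|\al|}(1-\tau)^{|\be|}(-i)^{|\al|+|\be|}=i^{|\al|}(-i)^{|\be|}\,\tau^{|\al|}(1-\tau)^{|\be|}$, which is the correct one (test it on $A=xD$ in one dimension, whose dual symbol is $y\xi+i$, or compare with Lemma \ref{lem:global-1} at $\tau=0$, $s=1$), so the factor $(-i)^{|\al|+|\be|}\,\tau^{|\al|}$ as printed in \eqref{local-1} appears to carry a sign slip in the $\al$-part rather than your derivation being at fault.
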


\begin{proof}[Sketch of proof]
Expand the amplitude $a$  by Taylor's formula at the point
$(x,y)=(z_\tau,z_\tau)$, replace $(y-x)e^{i(x-y)\cdot\xi}$ with
$i\nabla_\xi e^{i(x-y)\cdot\xi}$ and integrate by parts with respect
to $\xi$.
\end{proof}

The amplitude $\si_{A,\tau}(x,\xi)$ is called the {\it
$\tau$-symbol} of the PDO $A$. It is uniquely defined by the operator $A$ modulo
$S^{-\infty}$. The $0$-symbol is usually called just the symbol and
is denoted $\si_A$. The $\frac12$-symbol and $1$-symbol are said to
be the Weyl and the dual symbol respectively.

In the theory of PDOs, properties of operators are usually described
and results are stated in terms of their symbols. The following
composition formula plays a key role in the symbolic calculus.

\begin{theorem}\label{thm:local-1}
Let $A\in\Psi^{m_1}_{\rho,\de}$ and $B\in\Psi^{m_2}_{\rho,\de}$. If
$\de<\rho$ then the composition $AB$ is a PDO of class
$\Psi^{m_1+m_2}_{\rho,\de}$ whose symbol admits the asymptotic
expansion
\begin{equation}\label{local-2}
\si_{AB}(x,\xi)\ \sim\
\sum_\al\frac{(-i)^{|\al|}}{\al!}\,\partial_\xi^\alpha\si_A(x,\xi)\,\partial_x^\alpha
\si_B(x,\xi)\,,\qquad|\xi|\to\infty\,.
\end{equation}
\end{theorem}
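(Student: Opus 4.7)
The plan is to express the Schwartz kernel of $AB$ as a single oscillatory integral of PDO form, identify the resulting amplitude, and read off its symbol using Lemma~\ref{lem:local-1}. After subtracting a smoothing operator we may assume $A$ and $B$ are properly supported, so that $ABu\in C_0^\infty$ is well defined for $u\in C_0^\infty$; by Lemma~\ref{lem:local-1} with $\tau=0$ we may also replace the defining amplitudes by the left symbols $\si_A(x,\xi)$ and $\si_B(y,\eta)$. Then
$$
\AC_{AB}(x,z)\ =\ \int\AC(x,y)\,\BC(y,z)\,\dr y\ =\ (2\pi)^{-2n}\iiint e^{i(x-y)\cdot\xi+i(y-z)\cdot\eta}\,\si_A(x,\xi)\,\si_B(y,\eta)\,\dr\xi\,\dr\eta\,\dr y.
$$

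The decisive algebraic step is the change of variable $\xi=\eta+\zeta$, which turns the phase into $(x-z)\cdot\eta+(x-y)\cdot\zeta$. Separating off the $\eta$-integral rewrites the kernel as $(2\pi)^{-n}\int e^{i(x-z)\cdot\eta}c(x,\eta)\,\dr\eta$, where
$$
c(x,\eta)\ =\ (2\pi)^{-n}\iint e^{i(x-y)\cdot\zeta}\,\si_A(x,\eta+\zeta)\,\si_B(y,\eta)\,\dr y\,\dr\zeta.
$$
Standard oscillatory integration-by-parts in $(y,\zeta)$ shows that $c$ is a well-defined amplitude in $S^{m_1+m_2}_{\rho,\de}$ and that $\AC_{AB}$ is $C^\infty$ off the diagonal $\{x=z\}$, so $AB$ satisfies Definition~\ref{def:local-2}.

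To obtain the expansion I would Taylor-expand $\si_A(x,\eta+\zeta)$ in $\zeta$ around $0$. Using $\zeta^\al e^{i(x-y)\cdot\zeta}=i^{|\al|}\partial_y^\al e^{i(x-y)\cdot\zeta}$, integration by parts in $y$ followed by Fourier inversion in $(y,\zeta)$ collapses each monomial term to $(-i)^{|\al|}\partial_x^\al\si_B(x,\eta)$. The resulting series
$$
c(x,\eta)\ \sim\ \sum_\al\frac{(-i)^{|\al|}}{\al!}\,\partial_\xi^\al\si_A(x,\eta)\,\partial_x^\al\si_B(x,\eta)
$$
is exactly \eqref{local-2}, and since $c$ does not depend on $z$ it is the $0$-symbol of $AB$ modulo $S^{-\infty}$.

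The main obstacle is the remainder estimate justifying this as a genuine asymptotic expansion: truncating at order $N$ must leave a remainder in $S^{m_1+m_2-N(\rho-\de)}_{\rho,\de}$. I would split the $\zeta$-integral at $|\zeta|\sim|\eta|$; on $|\zeta|\leq|\eta|/2$ the integral form of Taylor's remainder provides a factor $|\zeta|^N$ while $|\eta+t\zeta|\sim|\eta|$ permits the use of \eqref{local-0} for $\si_A$, and on $|\zeta|\geq|\eta|/2$ repeated integration by parts in $y$ yields arbitrarily many negative powers of $|\zeta|$. Here the hypothesis $\de<\rho$ is essential: each $\partial_\xi^\al$ on $\si_A$ gains $\rho|\al|$ in $\eta$-decay while each $\partial_x^\al$ on $\si_B$ costs only $\de|\al|$, so the net gain $(\rho-\de)|\al|$ drives the orders of the terms to $-\infty$, securing the sense of \eqref{local-asymp}.
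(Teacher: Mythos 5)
Your proof is correct, but it follows a genuinely different route from the paper's. The paper represents $B$ by its \emph{dual} ($\tau=1$) symbol rather than its left symbol: by Fourier inversion the kernel of $AB$ is then immediately of the form {\bf(c$_1$)} with the single-frequency amplitude $a(x,z,\xi)=\si_A(x,\xi)\,\si_{B,1}(z,\xi)$, with no extra $y$- or $\eta$-integration, and a single application of Lemma \ref{lem:local-1} with $\tau=0$ produces \eqref{local-2} --- all the oscillatory-integral bookkeeping and the role of $\de<\rho$ are already packaged inside that lemma. You instead keep both factors in left-symbol form, which forces the triple integral and the substitution $\xi=\eta+\zeta$, and you then re-derive by hand (Taylor expansion of $\si_A(x,\eta+\zeta)$ in $\zeta$, integration by parts in $y$, splitting the $\zeta$-integral at $|\zeta|\sim|\eta|$) essentially the same analytic content that Lemma \ref{lem:local-1} encapsulates; your remainder discussion, with the gain $\rho|\al|$ from $\partial_\xi^\al\si_A$ against the loss $\de|\al|$ from $\partial_x^\al\si_B$, is exactly where $\de<\rho$ must enter, and you place it correctly. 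The trade-off: your version is self-contained and makes the mechanism of the expansion visible, while the paper's is shorter because it reuses the lemma it has already proved; you also handle the proper-support issue explicitly, which the paper suppresses. Both arguments arrive at the same formula and are sound.
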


\begin{proof}[Sketch of proof]
From the inversion formula for the Fourier transform it follows that
the Schwartz kernel of $AB$ is given by {\bf(c$_1$)} with the
amplitude $a(x,y,\xi)=\si_A(x,\xi)\si_{B,1}(y,\xi)$. Applying Lemma
\ref{lem:local-1} with $\tau=0$ to $a$, we obtain \eqref{local-2}.
\end{proof}

\begin{remark}\label{rem:local-1}
Theorem \ref{thm:local-1} implies, in particular, that the resolvent
of an elliptic differential operator is a PDO. Using \eqref{local-2},
one can also show that a PDO of order $m$ maps $W^s_2\bigcap
C_0^\infty$ into $W_2^{s-m}$, where $W^r_2$ are the Sobolev spaces.
\end{remark}

Note that in the above lemmas the
condition $\de<\rho$ is of crucial importance; if it is not
fulfilled then the terms in the right hand sides of \eqref{local-1}
and \eqref{local-2} do not form asymptotic series.

Clearly, the phase function $(x-y)\cdot\xi$ in {\bf(c$_1$)} depends
on the choice of coordinates on $U$. Passing to new coordinates
$\tilde x$ and $\tilde y$, we obtain
$$
\AC(\tilde x,\tilde y)\ =\ (2\pi)^{-n}\int e^{i(x(\tilde x)-y(\tilde
y))\cdot\xi} a(x(\tilde x),y(\tilde y),\xi)\,\dr\xi\,.
$$
In a sufficiently small neighbourhood of the diagonal $\{\tilde
x=\tilde y\}$, the new phase function $\f(\tilde x,\tilde
y,\xi)=(x(\tilde x)-y(\tilde y))\cdot\xi$ can be written in the form
$$
\f(\tilde x,\tilde y,\xi)\ =\ (\tilde x-\tilde y)\cdot\Phi(\tilde
x,\tilde y)\,\xi,
$$
where $\Phi(\tilde x,\tilde y)$ is a smooth $n\times n$-matrix
function such that $\det\Phi(\tilde x,\tilde y)\ne0$. Changing
variables $\eta=\Phi(\tilde x,\tilde y)\,\xi$, we see that
$$
\AC(\tilde x,\tilde y)\ =\ (2\pi)^{-n}\int e^{i(\tilde x-\tilde
y)\cdot\xi}\tilde a(\tilde x,\tilde y,\eta)\,\dr\eta\,,
$$
where
$$
\tilde a(\tilde x,\tilde y,\eta)\ =\ |\det\Phi(\tilde x,\tilde
y)|^{-1}\,a(x(\tilde x),y(\tilde y),\Phi^{-1}(\tilde x,\tilde
y)\eta)
$$
is a new amplitude. Thus Definition \ref{def:local-2} does not
depend on the choice of coordinates. However, there are two
obvious problems.

\begin{problem}\label{problem-1}
If $a\in S^m_{\rho,\de}$ then, generally speaking, the new amplitude
$\tilde a$ belongs only to the class $S^m_{\rho,\de'}$ with
$\de':=\max\{\de,1-\rho\}$. If $\rho<\frac12$ then $\de'>\rho$ and
the above lemmas fail. Thus for $\de<\rho<\frac12$ it is impossible
to define PDOs of class $S^m_{\rho,\de}$ on a manifold and to develop
a symbolic calculus using local coordinates.
\end{problem}

\begin{problem}\label{problem-2}
If $\max\{\de,1-\rho\}<\rho$ then the ``main part'' of the symbol
$\si_A$ (called the {\it principal} symbol of $A$) behaves as a
function on the cotangent bundle under change of coordinates.
However, lower order terms in \eqref{local-1} do not have an
invariant meaning. Therefore, the coordinate approach does not allow
one to study the subtle properties of PDOs, which depend on the lower
order terms.
\end{problem}

\section{Linear connections}\label{S:con}

The above problems do not arise if we define the phase function
$(x-y)\cdot\xi$ in an invariant way, without using local coordinates.
It is possible, in particular, when the manifold is equipped with a
linear connection. In this section we shall briefly recall some
relevant definitions and results from differential geometry.

Let $M$ be an $n$-dimensional $\,C^\infty$-manifold. Further on we
shall denote the points of $M$ by $x$, $y$ or $z$. The same letters
will be used for local coordinates on $M$. Similarly, $\xi$, $\eta$
and $\zeta$ will denote points of (or the dual coordinates) on the
fibres $T_x^*M$, $T_y^*M$ and $T_z^*M$ of the cotangent bundle
$T^*M$.

We are going to consider operators acting in the spaces of
$\ka$-densities on $M$, $\ka\in\R$. Recall that a complex-valued
``function'' $u$ on $M$ is said to be a $\ka$-density if it behaves
under change of coordinates in the following way
$$
u(y)\ =\ |\det\{\partial x^i/\partial
y^j\}|^\ka\,u\bigl(x(y)\bigr)\,.
$$
The usual functions on $M$ are $0$-densities. The $\ka$-densities
are sections of some complex linear bundle $\Om^\ka$ over $M$. We
denote by $C^\infty(M;\Om^\ka)$ and $C^\infty_0(M;\Om^\ka)$ the
spaces of smooth $\ka$-densities and smooth $\ka$-densities with
compact supports respectively. If $u\in C^\infty_0(M;\Om^\ka)$ and
$v\in C^\infty(M;\Om^{1-\ka})$ then the product $\ u\,v\ $ is a
density and the integral $\,\int_Mu\,v\,dx\,$ is independent of the
choice of coordinates. This allows one to define the inner product
$\,(u,v)=\int_Mu\,\bar v\,dx\,$ on the space of half-densities
$C^\infty_0(M;\Om^{1/2})$ and to introduce the Hilbert space
$L_2(M;\Om^{1/2})$ in the standard way.

In this and the next sections we shall be assuming that the manifold
$M$ is provided with a linear connection $\Ga$ (which may be
non-complete). This means that, for each local coordinate system, we
have fixed a set of smooth ``functions'' $\Ga^i_{jk}(x)$,
$i,j,k=1,\dots,n$, which behave under change of coordinates in the
following way,
\begin{equation}\label{con-1}
\sum_l\,\frac{\partial y^i}{\partial x^l}\,\Ga^l_{pq} (x) \ =\
\sum_{p,q}\,\frac{\partial y^j}{\partial
x^p}\,\Ga^i_{jk}\bigl(y(x)\bigr)\, \frac{\partial y^k}{\partial
x^q}\;+\;\frac{\partial^2 y^i}{\partial x^p\,\partial x^q}\,.
\end{equation}
The ``functions'' $\Ga^i_{jk}(x)$ are called the {\it Christoffel
symbols\/}. They can be chosen in an arbitrary way (provided that
\eqref{con-1} holds), and every set of Christoffel symbols determines
a linear connection of $M$.

A linear connection $\Ga$ is uniquely characterized by the torsion
tensor $T^i_{jk}:= \Ga^i_{jk}-\Ga^i_{kj}$ and the curvature tensor
$$
R^i_{jkl} \ :=\ \partial_{y^k}\Ga^i_{lj} \ -\ \partial_{y^l}\Ga^i_{kj}
\ +\ \sum_p\,\Ga^i_{kp}\Ga^p_{lj}\ -\ \sum_p\,\Ga^i_{lp}\,\Ga^p_{kj} \,.
$$
If both these tensors vanish on an open set $U\subset M$ then one
can choose local coordinates on a neighbourhood of each point $x\in
U$ in such a way that $\Ga^i_{jk}=0$. Such connections are called
{\it flat}. A connection $\Ga$ is called {\it symmetric} if
$T^i_{jk}=0$.

Let $\nu=\sum\nu^k(y)\,\partial_{y^k}$ be a vector field on $M\,$.
The equality \eqref{con-1} implies that
\begin{equation}\label{con-2}
\nabla_\nu\ :=\ \sum_k\,\nu^k(y)\,\partial_{y^k} \ +\
\sum_{i,j,k}\,\Ga^i_{kj}(y)\,\nu^k(y)\eta_i\,\partial_{\eta_j}
\end{equation}
is a correctly defined vector field on $T^*M$. The vector field
\eqref{con-2} is called the {\it horizontal lift} of $\nu\,$. The
horizontal lifts generate a $n$-dimensional subbundle $HT^*M$ of the
tangent bundle $TT^*M$ over $T^*M$, which is called the {\it
horizontal distribution}. The vertical vector fields
$\partial_{\eta_1},\dots,\partial_{\eta_n}$ generate another
$n$-dimensional subbundle $VT^*M\subset TT^*M$ which is called the
{\it vertical distribution}. Since $HT^*M\,\cap\,VT^*M=\{0\}\,$, the
tangent space $T_{(y,\eta)}T^*M$ at each point $(y,\eta)\in T^*M$
coincides with the sum of its horizontal and vertical subspaces.
Obviously, the horizontal subspaces depend on the choice of $\Ga$
whereas the vertical subspaces do not.

A curve in the cotangent bundle $T^*M$ is said to be horizontal (or
vertical) if its tangent vectors belong to $HT^*M$ (or $VT^*M$). For
any given curve $y(t)\subset M$ and covector $\eta_0\in T_{y(0)}^*M$
there exists a unique horizontal curve
$\bigl(y(t),\eta(t)\bigr)\subset T^*M$ starting at the point
$(y(0),\eta_0)\,$. It is defined in local coordinates $y$ by the
equations
$$
\frac d{dt}\,\eta_j(t) \;-\;\sum_{i,k}\,\Ga^i_{kj}
\bigl(y(t)\bigr)\, \dot y^k(t)\,\eta_i(t)\ =\ 0\,,\qquad \forall
j=1,\dots,n\,,
$$
and is called the {\it horizontal lift} of $y(t)\,$. The
corresponding linear transformation $\eta_0\to\eta(t)$ is said to be
the {\it parallel displacement} along the curve $y(t)$. By duality,
horizontal curves and parallel displacements are defined in the
tangent bundle $TM$ (and then in all the tensor bundles over $M$).

A curve $y(t)\subset M$ is said to be a geodesic if the curve
$\bigl(y(t),\dot y(t)\bigr)\subset TM$ is horizontal or,
equivalently, if
$$
{\overset{\cdot\cdot}y}{}^{k}(t)\ +\ \sum_{i,j}\,\Ga^k_{ij}
\bigl(y(t)\bigr)\, {\dot y}^i(t)\,{\dot y}^j(t)\ =\ 0\,,\qquad
\forall k=1,\dots,n\,.
$$
in any local coordinate system. If $U_x$ is a sufficiently small
neighbourhood  of $x$ then for every $y\in U_x$ there exists a
unique geodesic $\ga_{y,x}(t)$ such that $\ga_{y,x}(0)=x$ and
$\ga_{y,x}(1)=y$. The mapping $U_x\ni y\mapsto\dot\ga_{y,x}(0)\in
T_xM$ is a bijection between $U_x$ and a neighbourhood of the origin
in $T_xM$, and the corresponding coordinates on $U_x$ are called the
{\it normal coordinates\/}. In the normal coordinates $y$ centred at
$x$ we have $\ga_{y,x}(t)=x+t(y-x)$, so that $\dot\ga_{y,x}(t)=y-x$
for all $t\in[0,1]$.

Let $\Phi_{y,x}\,:\,T^*_xM\ \to\ T^*_yM$ be the parallel displacement
along the geodesic $\ga_{y,x}$, and let
$\Ups_{y,x}=|\det\Phi_{y,x}|$. One can easily check that $\Ups_{y,x}$
is a density in $y$ and a $(-1)$-density in $x\,$ (the map
$w_x\to\Ups_{y,x}w_x$ is the parallel displacement along $\ga_{y,x}$
between the fibres of the bundle $\Om$). Note that $\Phi_{y,x}$ and
$\Ups_{y,x}$ depend on the torsion tensor, whereas the geodesics are
determined only by the symmetric part of $\Ga$.

Given local coordinates $x=\{x^1,\ldots,x^n\}$, let us denote by
$\nabla_i$ the horizontal lifts of vector fields $\partial_{x^i}$.
For a multi-index $\al$ with $|\al|=q$, let
$\nabla_x^\al=\frac1{q!}\sum\nabla_{i_1}\ldots\nabla_{i_q}$ where
the sum is taken over all ordered collections of indices
$i_1,\ldots,i_q$ corresponding to the multi-index $\al$. The
following simple lemma can be found, for instance, in \cite[Section
3]{Sa2}.

\begin{lemma}\label{lem:taylor}
If $a\in C^\infty(T^*M)$ then $a(y,\Phi_{y,x}\xi)$ admits the following asymptotic expansion,
\begin{equation}\label{taylor}
a(y,\Phi_{y,x}\xi)\ \sim\ \sum_\al\frac1{\al!}\,\dot\ga^\al_{y,x}\,\nabla_x^\al a(x,\xi)\,,
\qquad y\to x\,.
\end{equation}
\end{lemma}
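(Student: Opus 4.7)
The plan is to prove the expansion by applying the standard one-variable Taylor formula to the function
$$
f(t)\ :=\ a\bigl(\ga_{y,x}(t),\Phi_{\ga_{y,x}(t),x}\xi\bigr),\qquad t\in[0,1],
$$
and then reorganising the resulting sum by multi-indices. Note that $f(0)=a(x,\xi)$ and $f(1)=a(y,\Phi_{y,x}\xi)$, so the usual Taylor expansion
$$
f(1)\ =\ \sum_{q<N}\frac{1}{q!}f^{(q)}(0)\;+\;R_N
$$
will produce the asymptotic series as $y\to x$, with $R_N$ bounded by $\sup_{t\in[0,1]}|f^{(N)}(t)|/N!$, which is of order $\mathrm{dist}(y,x)^N$ uniformly on compacts.

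To compute $f^{(q)}(0)$, I would work in normal coordinates $y$ centred at $x$. There $\ga_{y,x}(t)=x+tv$ with $v=y-x$, so $\dot\ga_{y,x}(t)\equiv v$ is the \emph{constant} coordinate vector, and $\eta(t):=\Phi_{\ga_{y,x}(t),x}\xi$ satisfies the parallel-transport equation
$$
\dot\eta_j(t)\ =\ \sum_{i,k}\Ga^i_{kj}\bigl(\ga_{y,x}(t)\bigr)\,v^k\,\eta_i(t).
$$
Differentiating $f(t)$ by the chain rule and substituting this ODE, the Christoffel terms combine with $v^k\partial_{y^k}$ to reproduce exactly the horizontal lift \eqref{con-2}, giving
$$
f'(t)\ =\ \sum_{k}v^k\,(\nabla_k a)\bigl(\ga_{y,x}(t),\eta(t)\bigr).
$$
Since $\nabla_k a\in C^\infty(T^*M)$, the very same identity can be applied again to $\nabla_k a$ in place of $a$, and iterating $q$ times yields
$$
f^{(q)}(t)\ =\ \sum_{i_1,\ldots,i_q}v^{i_1}\cdots v^{i_q}\,(\nabla_{i_1}\cdots\nabla_{i_q}a)\bigl(\ga_{y,x}(t),\eta(t)\bigr).
$$
The key point that makes this iteration clean is that, because $\ga_{y,x}$ is a geodesic, $\dot\ga_{y,x}$ is parallel along $\ga_{y,x}$ (equivalently, constant in normal coordinates), so no extra terms are produced by differentiating the velocity. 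This step is where I expect the main bookkeeping obstacle to lie.

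It remains to regroup $f^{(q)}(0)$ by multi-indices. Since the $v^{i_j}$'s are scalars, the tensor $v^{i_1}\cdots v^{i_q}$ is symmetric in its indices, so only the symmetric part of $\nabla_{i_1}\cdots\nabla_{i_q}$ contributes. Collecting together the $q!/\al!$ orderings that correspond to a given multi-index $\al$ with $|\al|=q$ and using the definition of $\nabla_x^\al$ one obtains, in normal coordinates,
$$
\frac{1}{q!}f^{(q)}(0)\ =\ \sum_{|\al|=q}\frac{1}{\al!}\,v^\al\,\nabla_x^\al a(x,\xi).
$$
Since $v=\dot\ga_{y,x}(0)$ in normal coordinates and the quantities $\dot\ga_{y,x}^\al\,\nabla_x^\al a(x,\xi)$ are invariantly defined, summing over $q$ and inserting the Taylor remainder estimate (which controls each truncation error by an appropriate power of $\mathrm{dist}(y,x)$) gives precisely the asymptotic expansion \eqref{taylor}.
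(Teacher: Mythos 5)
Your proof is correct and takes essentially the same route as the paper's two-line sketch: pass to normal coordinates centred at $x$ and apply Taylor's formula, the one-variable expansion along the geodesic $t\mapsto x+tv$ being exactly the multivariable expansion at $y=x$ in those coordinates. You merely make explicit the step the paper leaves implicit, namely that the parallel-transport ODE combines with $v^k\partial_{y^k}$ to turn the Taylor coefficients into the symmetrized horizontal derivatives $\dot\ga_{y,x}^\al\,\nabla_x^\al a(x,\xi)$.
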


\begin{proof}[Sketch of proof] Write down the left hand side in
normal coordinates $y$ centred at $x$ and apply Taylor's formula.
\end{proof}

\begin{remark}\label{rem:tensor}
If $a$ is a function on $T^*M$ then the  ``hypermatrix''
$\{\partial_\xi^\be\nabla_x^\al
a(x,\xi)\}_{\substack{|\al|=q,|\be|=p}}$ behaves as a $(p,q)$-tensor
under change of coordinates. Therefore all the formulae in the next
section have an invariant meaning and do not depend on the choice of
coordinates.
\end{remark}

\section{PDOs: a coordinate-free approach}\label{S:global}

\begin{definition}\label{def:global-1}
We shall say that an amplitude $a$ defined on $M\times T^*M$ belongs
to the class $S^m_{\rho,\de}(\Ga)$ with $\rho,\de\in[0,1]$ and
$m\in\R$ if
\begin{equation}\label{global-1}
\sup_{(x,z)\in
K}|\partial_x^\al\partial_\zeta^\beta\nabla_{i_1}\nabla_{i_2}\ldots\nabla_{i_q}
a(x,z,\zeta)|\ \leq\
C_{K;\alpha,\beta,q}\,(1+|\xi|)^{m+\de(|\al|+q)-\rho|\be|}
\end{equation}
for each compact set $K\subset M\times M$, all multi-indices
$\al,\be$ and all sets of indices $i_1,\ldots,i_q$, where $\nabla_k$
are horizontal lifts of the vector fields $\partial_{z_k}$ and
$C_{K;\alpha,\beta,\gamma}$ are some positive constants.
\end{definition}

From the definition of the horizontal lifts it follows that $a\in
S^m_{\rho,\de}(\Ga)$ with $\de\geq1-\rho$ if and only if $a$
satisfies \eqref{local-0} in any local coordinate system. In this
case the class $S^m_{\rho,\de}(\Ga)$ is the same for all linear
connections $\Ga$. If $\de<1-\rho$ then $S^m_{\rho,\de}(\Ga)$ depends
on the choice of $\Ga$. Note that \eqref{local-0} is a particular
case of \eqref{global-1}, in which the connection $\Ga$ is flat.

Let us fix a sufficiently small neighbourhood $V$ of the diagonal in
$M\times M\,$ and define $\ z_\tau=z_\tau(x,y)=\ga_{y,x}(\tau)\,$,
where $\tau\in[0,1]\,$ is regarded as a parameter. Consider the
phase function
\begin{equation}\label{global-2}
\f_\tau(x,\zeta,y)\ =\ -\langle\dot\ga_{y,x}(\tau),\zeta\rangle\,,
\qquad (x,y)\in V\,,\quad \zeta\in T^*_{z_\tau}M\,,
\end{equation}
where $\langle\cdot,\cdot\rangle$ denotes the standard pairing
between vectors and covectors. The function $\f_\tau$ is invariantly
defined and, by the above, coincides with $(x-y)\cdot\zeta$ in
normal coordinates $y$ centred at $x$.

\begin{definition}\label{def:global-2}
An operator $A$ acting in the space of $\ka$-densities on $M$ is said
to be a PDO of class $\Psi^m_{\rho,\de}(\Om^\ka,\Ga)$ if
\begin{enumerate}
\item[{\bf(c)}]
its Schwartz kernel $\AC(x,y)$ is infinitely differentiable outside
the diagonal $\{x=y\}$,
\item[{\bf(c$_2$)}]
$\AC(x,y)=(2\pi)^{-n}\,p_{\ka,\tau}(x,y)\int_{T^*_{z_\tau}M}
e^{i\f_\tau(x,\zeta,y)} a(z_s;z_\tau,\zeta)\,\dr\zeta\ $ in a
neighbourhood of the diagonal, where $a\in S^m_{\rho,\de}(\Ga)$,
$p_{\ka,\tau}:=\Ups_{y,z_\tau}^{1-\ka}\Ups_{z_\tau,x}^{-\ka}\,$ and
$s,\tau\in[0,1]$ are some fixed numbers.
\end{enumerate}
\end{definition}

\begin{remark}\label{rem:global-1}
If $y$ are normal coordinates centred at $x$ then
$\,\f_\tau(x,\zeta,y)=(x-y)\cdot\zeta\,$ and the integral
{\bf(c$_2$)} takes the form {\bf(c$_1$)}. However, Definition
\ref{def:local-2} assumes that $x$ and $y$ are the same local
coordinates  on $U$, whereas the above identity holds if we choose
coordinates $y$ depending on the point $x$.
\end{remark}

\begin{remark}\label{rem:global-2}
The weight factor $p_{\ka,\tau}$ is introduced for the following two
reasons.
\begin{enumerate}
\item[(1)]
It makes the definition independent of the choice of coordinates
$\zeta$ in the cotangent space $T^*_{z_\tau}M$.
\item[(2)]
Because of this factor, the Schwartz kernel behaves as a
$(1-\ka)$-density in $y$ and $\ka$-density in $x$, that is,
{\bf(c$_2$)} defines an operator in the space of $\ka$-densities for
all $\ka\in\R$ and all $s,\tau\in[0,1]$. In particular, this allows
us to consider PDOs in the Hilbert space $L_2(M,\Om^{1/2})$ and to
introduce Weyl symbols (corresponding to $\tau=\frac12$).
\end{enumerate}
One can replace $p_{\ka,\tau}$ in Definition \ref{def:global-2} with
any other smooth weight factor $p(x,y)$ which behaves in a similar
way under change of coordinates. The precise choice of the weight
factor seems to be of little importance, since all formulae in the
symbolic calculi corresponding to different weight factors $p$ and
$\tilde p$ can  easily be deduced from each other by expanding the
function $\,p^{-1}\tilde p\,$ into an asymptotic series of the form
\eqref{taylor}, replacing
$\dot\ga_{y,x}(z_\tau)\,e^{i\f_\tau(x,\zeta,y)}$ with
$i\nabla_\zeta\,e^{i\f_\tau(x,\zeta,y)}$ and integrating by parts
with respect to $\zeta$.
\end{remark}

\begin{lemma}\label{lem:global-1}
If $\,\de<\rho$ and $a\in S^m_{\rho,\de}(\Ga)$ then for all $s,\tau\in[0,1]$
$$
p_{\ka,\tau}\int_{T^*_{z_\tau}M}
e^{i\f_\tau(x,\zeta,y)} a(z_s;z_\tau,\zeta)\,\dr\zeta\ =\ p_{\ka,\tau}\int_{T^*_{z_\tau}M}
e^{i\f_\tau(x,\zeta,y)}\si_{A,\tau}(z_\tau,\zeta)\,\dr\zeta
$$
and
$$
p_{\ka,\tau}\int_{T^*_{z_\tau}M}
e^{i\f_\tau(x,\zeta,y)}\si_{A,\tau}(z_\tau,\zeta)\,\dr\zeta\ =\
p_{\ka,s}\int_{T^*_{z_s}M}
e^{i\f_s(x,\zeta,y)}\si_{A,s}(z_s,\zeta)\,\dr\zeta
$$
modulo $C^\infty$-densities, where $\si_{A,\tau}$ and $\si_{A,s}$ are
amplitudes of class $S^m_{\rho,\de}(\Ga)$ given  by the asymptotic
expansions
$$
\si_{A,\tau}(x,\xi)\ \sim\
\sum_\al\frac{(-i)^{|\al|}\,(s-\tau)^{|\al|}}{\al!}\,\left.\partial_\xi^\al
\nabla_y^\al
a(y;x,\xi)\right|_{y=x},\qquad|\xi|\to\infty\,,
$$
$$
\si_{A,s}(x,\xi)\ \sim\
\sum_\al\frac{(-i)^{|\al|}\,(\tau-s)^{|\al|}}{\al!}\
\partial_\xi^\al \nabla_x^\al
\si_{A,\tau}(x,\xi),\qquad|\xi|\to\infty\,.
$$
\end{lemma}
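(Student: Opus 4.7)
The plan is to mimic the proof of Lemma \ref{lem:local-1}, replacing the flat computation by its coordinate-free counterpart: pass to normal coordinates, expand the amplitude via Lemma \ref{lem:taylor} (which plays the role of Taylor's formula), convert powers of $y-x$ to $\zeta$-derivatives of $e^{i\f_\tau}$, and integrate by parts. The main preliminary observation is that in normal coordinates $y$ centred at a point $w$, one has $\ga_{y,w}(t)=w+t(y-w)$, hence $\dot\ga_{y,w}(t)=y-w$ independent of $t$; consequently the phase reduces to $\f_\tau(x,\zeta,y)=-(y-x)\cdot\zeta$, the Christoffel symbols vanish at $w$ so that $\nabla_{y^j}=\partial_{y^j}$ at $y=w$, and, crucially, $z_s-z_\tau=(s-\tau)(y-x)$.

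For the first identity, I work in normal coordinates $y$ centred at $x$ and expand $a(z_s;z_\tau,\zeta)$ in its first argument around $z_\tau$ by Lemma \ref{lem:taylor}:
\[
a(z_s;z_\tau,\zeta)\ \sim\ \sum_\al\frac{(s-\tau)^{|\al|}}{\al!}\,(y-x)^\al\,\nabla_y^\al a(y;z_\tau,\zeta)\bigr|_{y=z_\tau}.
\]
Substituting $(y-x)^\al e^{-i(y-x)\cdot\zeta}=i^{|\al|}\partial_\zeta^\al e^{-i(y-x)\cdot\zeta}$ and integrating by parts in $\zeta$ converts this expansion, term by term, into the asymptotic series
\[
\si_{A,\tau}(z_\tau,\zeta)\ \sim\ \sum_\al\frac{(-i)^{|\al|}(s-\tau)^{|\al|}}{\al!}\,\partial_\zeta^\al\nabla_y^\al a(y;z_\tau,\zeta)\bigr|_{y=z_\tau},
\]
which, translated back to invariant form, is the stated formula. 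Exactly as in the classical case, the hypothesis $\de<\rho$ is needed to ensure the series is genuinely asymptotic in $S^m_{\rho,\de}(\Ga)$; the remainder after truncation gives rise to an oscillatory integral whose amplitude lies in some $S^{m'}_{\rho,\de}(\Ga)$ with $m'\to-\infty$, hence produces a $C^\infty$-kernel.

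For the second identity, I first move the $\zeta$-integration from $T^*_{z_\tau}M$ to $T^*_{z_s}M$ by the change of variable $\zeta'=\Phi_{z_\tau,z_s}\zeta$. Since parallel transport preserves the canonical pairing and $\dot\ga_{y,x}(\tau)=\Phi_{z_\tau,z_s}\dot\ga_{y,x}(s)$ (the two tangents lie on the same geodesic), one has $\f_\tau(x,\Phi_{z_\tau,z_s}\zeta,y)=\f_s(x,\zeta,y)$; the Jacobian is $\Ups_{z_\tau,z_s}$, and a short computation with $\Ups_{y,z_\tau}=\Ups_{y,z_s}\Ups_{z_s,z_\tau}$ and $\Ups_{z_\tau,x}=\Ups_{z_\tau,z_s}\Ups_{z_s,x}$ gives $p_{\ka,\tau}\Ups_{z_\tau,z_s}=p_{\ka,s}$. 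Thus the kernel is rewritten in $\f_s$-form with amplitude $\tilde a(z_\tau;z_s,\zeta):=\si_{A,\tau}(z_\tau,\Phi_{z_\tau,z_s}\zeta)$. Now I apply the first identity (with the roles of $s$ and $\tau$ interchanged) to $\tilde a$:
\[
\si_{A,s}(x,\xi)\ \sim\ \sum_\al\frac{(-i)^{|\al|}(\tau-s)^{|\al|}}{\al!}\,\partial_\xi^\al\nabla_y^\al\tilde a(y;x,\xi)\bigr|_{y=x}.
\]
Finally, invoking Lemma \ref{lem:taylor} once more to expand $\si_{A,\tau}(y,\Phi_{y,x}\xi)$ in $y$ about $x$ and using that in normal coordinates at $x$ only the $\beta=\al$ term of that Taylor series survives the $\partial^\al_y|_{y=x}$, one identifies $\nabla_y^\al\tilde a(y;x,\xi)|_{y=x}=\nabla_x^\al\si_{A,\tau}(x,\xi)$, yielding the claimed formula.

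The main obstacle I anticipate is bookkeeping: verifying that the change of variable cleanly exchanges $p_{\ka,\tau}$ for $p_{\ka,s}$ and $\f_\tau$ for $\f_s$, and above all that the double-expansion in the second part collapses correctly to the single expansion $\sum_\al\frac{(-i)^{|\al|}(\tau-s)^{|\al|}}{\al!}\partial_\xi^\al\nabla_x^\al\si_{A,\tau}$ rather than a tangled double sum. The reduction relies on working in normal coordinates at $x$ at the critical evaluation step, where $\nabla_y=\partial_y$ and polynomials in $y-x$ are diagonalised by $\partial_y^\al|_{y=x}$; once this is exploited, the rest is routine verification of symbol-class membership using $\de<\rho$.
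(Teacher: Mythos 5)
Your argument is essentially the paper's own proof: the first identity via the expansion \eqref{taylor} applied to the first argument of $a$ about $z_\tau$, conversion of the $\dot\ga_{z_s,z_\tau}$ factors into $\zeta$-derivatives of the exponential and integration by parts, and the second identity via the change of variables by parallel displacement $\Phi$ followed by the same scheme; your sign bookkeeping and the verification that $p_{\ka,\tau}\Ups_{z_\tau,z_s}=p_{\ka,s}$ are correct. The only loose remark is that Christoffel symbols vanish only at the centre $x$ of the normal coordinates, not at $z_\tau$, but this is harmless since Lemma \ref{lem:taylor} already gives the expansion in invariant form.
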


\begin{proof}[Sketch of proof]
The first identity is proved by applying \eqref{taylor} with
$x=z_\tau$ and $y=z_s$ to the function $a(\cdot;z_\tau,\zeta)$ with
fixed $(z_\tau,\zeta)$, substituting
$\dot\ga_{z_s,z_\tau}\,e^{i\f_\tau}=(\tau-s)\,\nabla_\zeta
e^{i\f_\tau}$ and integrating by parts. The second is obtained in a
similar way, after changing variables
$\zeta=\Phi_{z_s,z_\tau}\zeta'$.
\end{proof}

Lemma \ref{lem:global-1} shows that Definition \ref{def:global-2}
does not depend on the choice of $\tau$ and $s$, and that every PDO
$A$ is defined modulo $\Psi^{-\infty}$ by its $\tau$-symbol
$\si_{A,\tau}$. The other way round, for each linear connection
$\Ga$, the $\tau$-symbol $\si_{A,\tau}$ is determined by the
operator $A$ modulo $S^{-\infty}$.

If $A\in\Psi^m_{\rho,\de}(\Om^\ka,\Ga)$ then, in a similar way, one
can show that
\begin{equation}\label{global-adj}
\si_{A^*,\tau}(x,\xi)\ \sim\
\sum_\al\frac{(-i)^{|\al|}\,(1-2\tau)^{|\al|}}{\al!}\
\partial_\xi^\al \nabla_x^\al
\overline{\si_{A,\tau}(x,\xi)},\qquad|\xi|\to\infty\,,
\end{equation}
where $A^*$ is the adjoint operator acting in the space of
$(1-\ka)$-densities. In particular, for the Weyl symbols we have
$\si_{A^*,1/2}-\overline{\si_{A,1/2}}\in S^\infty$ for all
$\ka\in\R$.

\begin{remark}\label{rem:global-3}
The full $\tau$-symbol $\si_{A,\tau}$ depends on $\Ga$ and $\tau$. If
$\max\{\de,1-\rho\}<\rho$ then all the $\tau$-symbols $\si_{A,\tau}$
corresponding to different connections $\Ga$ coincide with the
principal symbol of $A$ modulo a lower order term. However, in the
general case it seems to be impossible to define a principal symbol
of $A$ without introducing an additional structure on the manifold
$M$ or a global phase function (see Subsection \ref{S:other-phase}).
\end{remark}

Let $\Upsilon_\ka(x,y,z):=
\Upsilon^{1-\ka}_{y,z}\,\Upsilon^{2-\ka}_{z,x}\,\Upsilon^{1-\ka}_{x,y}\,$,
$\psi(x,\xi;y,z):=
\langle\dot\ga_{y,x},\xi\rangle-\langle\dot\ga_{z,x},\xi\rangle
-\langle\dot\ga_{y,z},\Phi_{z,x}\xi\rangle\,$ and
$$
P^{(\ka)}_{\be,\ga}(x,\xi)\ =\ \Bigl(
(\partial_y+\partial_z)^\be\partial_y^\ga\,\sum_{|\be'|\le|\be|}\,
\frac 1{\be'!} D^{\be'}_\xi\partial^{\be'}_y(e^{i\psi}
\Upsilon_\ka)\left.\Bigr)\right|_{y=z=x}\,,
$$
where $y$ and $z$ are normal coordinates centred at $x$. The
functions $P^{(\ka)}_{\be,\ga}\in C^\infty(T^*M)$ are polynomials in
$\xi\,$; we shall denote their degrees by $d^{(\ka)}_{\be,\ga}$.

One can easily show that $P^{(\ka)}_{0,\ga}\equiv0$,
$P^{(\ka)}_{\be,0}\equiv0$ and $d^{(\ka)}_{\be,\ga}\le
\min\{\,|\be|,|\ga|\,\}$ for any connection $\Ga$. Moreover, if $\Ga$
is symmetric then
$d^{(\ka)}_{\be,\ga}\le\min\{\,|\be|,|\ga|,(|\be|+|\ga|)/3\,\} $
\cite[Lemma 8.1]{Sa2}.

\begin{theorem}\label{thm:global-1}
Let $\,A\in\Psi^{m_1}_{\rho,\de}(\Om^\ka,\Ga)\,$ and
$\,B\in\Psi^{m_2}_{\rho,\de}(\Om^\ka,\Ga)\,$, where $\rho>\de$.
Assume, in addition, that
\begin{enumerate}
\item[(1)]
either $\rho>1/2\,$,
\item[(2)]
or the connection $\Ga$ is symmetric and $\rho>1/3\,$,
\item[(3)]
or at least one of the PDOs $A$ and $B$ belongs to
$\Psi^m_{1,0}(\Om^\ka,\Ga)$.
\end{enumerate}
Then $AB\in\Psi^{m_1+m_2}_{\rho,\de}(\Om^\ka,\Ga)$ and
\begin{equation}\label{global-3}
\si_{AB}(x,\xi)\ \sim\ \sum_{\al,\be,\ga}\,\frac 1{\al!}\, \frac
1{\be!}\,\frac 1{\ga!}\,\,P^{(\ka)}_{\be,\ga}(x,\xi)\,\,
D_\xi^{\al+\be}\si_A(x,\xi)\,\,D_\xi^\ga\nabla_x^\al\si_B(x,\xi)\,,\qquad|\xi|\to\infty.
\end{equation}
\end{theorem}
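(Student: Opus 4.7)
The plan is to compute the Schwartz kernel of $AB$ directly from {\bf(c$_2$)}, collapse the two cotangent integrations into one by a parallel-transport change of variable, and extract the $0$-symbol by Taylor expansion and integration by parts. Take $\tau=s=0$ throughout so that
$$
\AC(x,z)\ =\ (2\pi)^{-n}\,p_{\ka,0}(x,z)\int_{T_x^*M}e^{-i\langle\dot\ga_{z,x},\xi\rangle}\si_A(x,\xi)\,\dr\xi
$$
and analogously $\BC(z,y)=(2\pi)^{-n}p_{\ka,0}(z,y)\int_{T_z^*M}e^{-i\langle\dot\ga_{y,z},\eta\rangle}\si_B(z,\eta)\,\dr\eta$. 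Convolving the two kernels and then changing variables $\eta=\Phi_{z,x}\xi'$ brings every cotangent vector into $T_x^*M$; the resulting Jacobian combines with $p_{\ka,0}(x,z)$, $p_{\ka,0}(z,y)$ and the factor $p_{\ka,0}(x,y)^{-1}$ needed to read off the $0$-symbol of $AB$ to produce exactly $\Ups_\ka(x,y,z)$. The compound phase becomes $-\langle\dot\ga_{z,x},\xi\rangle-\langle\dot\ga_{y,z},\Phi_{z,x}\xi'\rangle$, which equals $-\langle\dot\ga_{y,x},\xi\rangle+\psi(x,\xi;y,z)$ on the diagonal $\xi'=\xi$.

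Next, the amplitude $e^{i\psi(x,\xi';y,z)}\,\Ups_\ka(x,y,z)\,\si_B(z,\Phi_{z,x}\xi')$ is jointly expanded in $z$ about $x$ (using Lemma \ref{lem:taylor}) and in $\xi'-\xi$ by ordinary Taylor expansion in the fibre. Each power $\dot\ga_{z,x}^\be$ produced by these expansions is traded for $(i\nabla_\xi)^\be$ acting on $\si_A$ through the identity $\dot\ga_{z,x}^\be\,e^{-i\langle\dot\ga_{z,x},\xi\rangle}=(i\nabla_\xi)^\be\,e^{-i\langle\dot\ga_{z,x},\xi\rangle}$ followed by integration by parts in $\xi$, while each monomial $(\xi'-\xi)^\ga$ is integrated against $e^{-i\langle\dot\ga_{y,z},\Phi_{z,x}\xi'\rangle}$ and, via Fourier inversion on $T_x^*M$, produces $D_\xi^\ga\si_B$. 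Collecting the coefficients of $e^{i\psi}\Ups_\ka$ evaluated at $y=z=x$ that carry the index pair $(\be,\ga)$ reproduces precisely the polynomials $P^{(\ka)}_{\be,\ga}(x,\xi)$, and the Taylor coefficient of order $\al$ in the base variable supplies the additional factor $\frac{1}{\al!}\nabla_x^\al$ on the amplitudes; together with Lemma \ref{lem:global-1}, this yields \eqref{global-3} as the $0$-symbol of $AB$.

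The main obstacle is proving that \eqref{global-3} is a genuine asymptotic series, i.e.\ that the $(\al,\be,\ga)$-term belongs to $S^{m_1+m_2-\mu(\al,\be,\ga)}_{\rho,\de}(\Ga)$ with $\mu\to\infty$ as $|\al|+|\be|+|\ga|\to\infty$. Accounting for the $\rho$-gain of each $\xi$-derivative, the $\de$-loss of each horizontal lift $\nabla_x$, and the loss $d^{(\ka)}_{\be,\ga}$ from the polynomial degree of $P^{(\ka)}_{\be,\ga}$, the deficit equals $(\rho-\de)|\al|+\rho(|\be|+|\ga|)-d^{(\ka)}_{\be,\ga}$. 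Since $\rho>\de$, the $|\al|$-part is harmless; the issue is controlling $\rho(|\be|+|\ga|)-d^{(\ka)}_{\be,\ga}$. Under (1), the bound $d^{(\ka)}_{\be,\ga}\le\min\{|\be|,|\ga|\}\le(|\be|+|\ga|)/2$ forces the deficit to grow like $(\rho-1/2)(|\be|+|\ga|)$, giving convergence when $\rho>1/2$. Under (2), \cite[Lemma 8.1]{Sa2} refines this to $d^{(\ka)}_{\be,\ga}\le(|\be|+|\ga|)/3$, so $\rho>1/3$ suffices. Under (3), the factor carrying $\rho=1$, $\de=0$ contributes no $\de$-loss and a full unit gain per $\xi$-derivative, and the identities $P^{(\ka)}_{0,\ga}\equiv P^{(\ka)}_{\be,0}\equiv0$ discard the otherwise critical corner terms, leaving deficit bounds that grow in every direction for any $\rho>\de$. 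In each case the remainder at finite order is estimated by the standard oscillatory-integral technique, placing all tails in $\Psi^{-\infty}(\Om^\ka,\Ga)$.
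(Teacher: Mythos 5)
Your proposal is correct and follows essentially the route the paper intends: the paper gives only a pointer (``similar to Theorem \ref{thm:local-1}, see \cite[Section 8]{Sa2}''), and its definitions of $\psi$, $\Upsilon_\ka$ and $P^{(\ka)}_{\be,\ga}$ presuppose exactly the kernel-composition, parallel-displacement change of variables, and stationary-phase/integration-by-parts argument you reconstruct. Your order-counting for the deficit $(\rho-\de)|\al|+\rho(|\be|+|\ga|)-d^{(\ka)}_{\be,\ga}$ under hypotheses (1)--(3) is also the intended justification that \eqref{global-3} is a genuine asymptotic series.
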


\noindent{\it Proof\/} of Theorem \ref{thm:global-1} is similar to
that of Theorem \ref{thm:local-1} (see \cite[Section 8]{Sa2}). In
particular, if the connection $\Ga$ is flat then
$P^{(\ka)}_{\be,\ga}\equiv0$ as $|\be|+|\ga|\ge1\,$ and
\eqref{global-3} turns into \eqref{local-2}.

\begin{remark}\label{rem:global-4}
The conditions on $\rho$ and the estimates for $d^{(\ka)}_{\be,\ga}$
imply that the terms in the right hand side of \eqref{global-3} form
an asymptotic series. It is plausible that the composition formula
\eqref{global-3} holds whenever the orders of the terms in the right
hand side tend to $-\infty$ as $|\al|+|\be|+|\ga|\to\infty$. However,
it is not clear how this can be proved.
\end{remark}

\begin{remark}\label{rem:global-5}
Coefficients of the polynomials $P^{(\ka)}_{\be,\ga}$ are components
of some tensors, which are polynomials in the curvature and torsion
tensors and their symmetric covariant differentials.
\end{remark}

In the same way as in the local theory of PDOs, Theorem
\ref{thm:global-1} implies standard results on the boundedness of
PDOs in the Sobolev spaces and allows one to construct the resolvent
of an elliptic operator in the form of a PDO.

\section{Functions of the Laplacian}\label{S:func}

In this section we assume that $M$ is a compact Riemannian manifold
without boundary and denote
$|\xi|_x:=\sqrt{\sum_{i,j}g^{ij}(x)\,\xi_i\xi_j}$ where $\xi\in
T^*_xM$ and $\{g^{ij}\}$ is the metric tensor. It is well known that
there exist a unique symmetric connection $\Ga_\gb$ on $M$, called
the Levi--Civita connection, such that the function $|\xi|_x$ is
constant along every horizontal curve in $T^*M$.

Denote by $\Delta$ the Laplace operator acting in the space of
half-densities; in local coordinates
$$
\Delta u(x)\ =\
g^{\ka-1}(x)\,\sum_{i,j}\,\partial_{x^i}\bigl(g(x)\,g^{ij}(x)\,
\partial_{x^j}\left(g^{-\ka}(x)u(x)\right)\bigr)\,,
$$
where $g:=|\det g^{ij}|^{-1/2}$ is the canonical Riemannian density.
Let $\nu$ be a self-adjoint first order PDO such that
$-\Delta+\nu>0$, and let $A_\nu:=\sqrt{-\Delta+\nu}$. The operator
$A_\nu$ is a PDO of class $\Psi^1_{1,0}$ whose symbol coincides with
$|\xi|_x$ modulo $S^0$ in any local coordinate system. Thus we have
$A_\nu\in\Psi^1_{1,0}(\Om^{1/2},\Ga)$ for any linear connection
$\Ga$.

\begin{definition}\label{def:func-1}
If $\rho\in(0,1]$, let $S^m_\rho$ be the class of infinitely
differentiable functions $\om$ on $\R$ such that
\begin{equation}\label{func-0}
|\partial_s^j\om(s)|\ \leq\ C_j\,(1+|s|)^{m-j\rho}\,,\qquad\forall
j=0,1,\ldots,
\end{equation}
where $C_k$ are some constants.
\end{definition}

A natural conjecture is that the operator $\om(A_\nu)$ is a PDO
whenever $\om\in S^m_\rho$. If it is true then the symbol of
$\om(A_\nu)$ should coincide with $\om(|\xi|_x)$ modulo lower order
terms. If $\rho<1/2$ then, generally speaking, this function does not
belong to the class $S^m_{\rho,\delta}$ with $\rho>\de$ in any local
coordinate system. However, since its horizontal derivatives
corresponding to the Levi--Civita connection are equal to zero, we
have $\om(|\xi|_x)\in S^m_{\rho,\delta}(\Ga_\gb)$ for all $\om\in
S^m_\rho$. In particular, this implies the following

\begin{lemma}\label{lem:func-1}
Let $\tau\in[0,1)$ and $U_\tau(t):=\exp(itA_\nu^\tau)$. Then
$U_\tau(t)\in\Psi^m_{1-\tau,0}(\Om^{1/2},\Ga_\gb)$ for all $t\in\R$
and $\si_{U_\tau(t)}(x,\xi)=e^{it|\xi|_x^\tau}\,b^{(\tau)}(t,x,\xi)$,
where $b^{(\tau)}\in C^\infty(\R\times T^*M)$ and
$\partial_t^kb^{(\tau)}\in S^0_{1,0}$ for all $k=0,1,\ldots$ and each
fixed $t$.
\end{lemma}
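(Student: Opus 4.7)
The plan is to construct $U_\tau(t)$ as a parametrix via the WKB ansatz
$$
\si_{U_\tau(t)}(x,\xi)\ \sim\ e^{it|\xi|_x^\tau}\,b^{(\tau)}(t,x,\xi),
$$
to solve transport equations for the amplitude $b^{(\tau)}$ order by order, and then to identify the parametrix with the true propagator modulo $\Psi^{-\infty}$. As a preliminary, I would verify that $A_\nu^\tau\in\Psi^\tau_{1,0}(\Om^{1/2},\Ga_\gb)$ with symbol $|\xi|_x^\tau$ modulo $S^{\tau-1}_{1,0}$; this follows from a Seeley-type Dunford integral applied to the resolvent of $A_\nu$, which is itself a PDO by Theorem \ref{thm:global-1}. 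The crucial consequence is that $A_\nu^\tau$ lies in $\Psi^\tau_{1,0}$, not merely in $\Psi^\tau_{1-\tau,0}$; hence its composition with any PDO in $\Psi^m_{1-\tau,0}(\Ga_\gb)$ falls into case (3) of Theorem \ref{thm:global-1}, granting us the full composition formula \eqref{global-3} even when $\rho=1-\tau$ drops below $1/2$.

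Substituting the ansatz into $\partial_t U_\tau(t)=iA_\nu^\tau U_\tau(t)$ and applying \eqref{global-3} yields the transport equation for $b^{(\tau)}$. The decisive simplification is that, by the defining property of the Levi--Civita connection, $\nabla_{x^k}|\xi|_x^\tau=0$, whence $\nabla_x^\al e^{it|\xi|_x^\tau}=0$ for $|\al|\geq 1$ and $\nabla_x^\al\bigl(e^{it|\xi|_x^\tau}b\bigr)=e^{it|\xi|_x^\tau}\nabla_x^\al b$ by Leibniz. After factoring out the phase, \eqref{global-3} collapses to a scalar equation of the schematic form
$$
\partial_t b^{(\tau)}\ =\ i\bigl(|\xi|_x^\tau-\si_{A_\nu^\tau}\bigr)\,b^{(\tau)}\ +\ \sum_{|\al|+|\be|+|\ga|\geq 1}c_{\al\be\ga}(t,x,\xi)\,D_\xi^\ga\nabla_x^\al b^{(\tau)},
$$
where each $c_{\al\be\ga}$ is a product of the polynomial $P^{(1/2)}_{\be,\ga}$ with $\xi$-derivatives of $\si_{A_\nu^\tau}$ and of the phase. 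Using $\deg P^{(1/2)}_{\be,\ga}\leq\min(|\be|,|\ga|)$, the identities $P^{(1/2)}_{\be,0}=P^{(1/2)}_{0,\ga}=0$ for positive multi-indices, and the order reduction $D_\xi:S^s_{1,0}\to S^{s-1}_{1,0}$, one checks that each coefficient $c_{\al\be\ga}$ has order at most $\tau-1<0$, so that the entire sum strictly lowers the $|\xi|$-order of $b^{(\tau)}$.

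I would then construct $b^{(\tau)}\sim\sum_{j\geq 0}b_j^{(\tau)}$ with $b_j^{(\tau)}\in S^{-j}_{1,0}$, initial conditions $b_0^{(\tau)}(0,\cdot)=1$ and $b_j^{(\tau)}(0,\cdot)=0$ for $j\geq 1$, by matching the expansion order by order: at each level the transport equation reduces to a first-order linear ODE in $t$ for $b_j^{(\tau)}$ whose right-hand side depends only on $b_0^{(\tau)},\dots,b_{j-1}^{(\tau)}$, and is solved by quadrature. Borel summation in $S_{1,0}$ then produces a single $b^{(\tau)}\in S^0_{1,0}$ with the desired asymptotic expansion, and repeated differentiation of the transport equation in $t$ yields $\partial_t^k b^{(\tau)}\in S^0_{1,0}$ for every $k$.

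Finally, to pass from the constructed parametrix $\tilde U_\tau(t)$ to the true propagator, one notes that $(\partial_t-iA_\nu^\tau)\tilde U_\tau(t)\in\Psi^{-\infty}$ and $\tilde U_\tau(0)-I\in\Psi^{-\infty}$; a Duhamel/energy argument using the self-adjointness of $A_\nu^\tau$ on $L_2(M;\Om^{1/2})$ then forces $U_\tau(t)-\tilde U_\tau(t)\in\Psi^{-\infty}$ uniformly on compact $t$-intervals. The main obstacle I anticipate is the order bookkeeping when $\tau$ is close to $1$ (so $\rho=1-\tau$ is small): one has no slack from the composition formula itself, and must exploit simultaneously the Levi--Civita identity $\nabla_x|\xi|_x=0$ (to keep horizontal derivatives off the phase) together with the inclusion $A_\nu^\tau\in\Psi^\tau_{1,0}$ (to license case (3) of Theorem \ref{thm:global-1}) in order to ensure that the transport equations produce an amplitude in the sharper class $S^0_{1,0}$ rather than only $S^0_{1-\tau,0}$.
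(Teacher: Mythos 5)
Your proposal is correct and follows essentially the same route as the paper: the authors likewise posit the ansatz $\si_{U_\tau(t)}=e^{it|\xi|_x^\tau}\,b^{(\tau)}(t,x,\xi)$, substitute it into $\partial_tU_\tau(t)=iA_\nu^\tau U_\tau(t)$, apply the composition formula \eqref{global-3} to $A_\nu^\tau U_\tau(t)$, and equate terms of the same order. The details you add (invoking case (3) of Theorem \ref{thm:global-1} via $A_\nu^\tau\in\Psi^\tau_{1,0}$, the vanishing of the horizontal derivatives of $|\xi|_x$ for the Levi--Civita connection, Borel summation, and the Duhamel comparison of the parametrix with the true propagator) are exactly the standard elaborations that the paper's sketch leaves implicit.
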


\begin{proof}[Sketch of proof]
Write down $U_\tau(t)$ formally as an integral {\bf(c$_2$)} with an
unknown symbol of the form $e^{it|\xi|_x^\tau}\,b^{(\tau)}(t,x,\xi)$,
substitute the integral into the equation
$\partial_tU_\tau(t)=iA_\nu^\tau U_\tau(t)$, apply the composition
formula \eqref{global-3} to $A_\nu^\tau U_\tau(t)$ and equate terms
of the same order in the right and left hand sides.
\end{proof}

Using Lemma \ref{lem:func-1}, one can construct other functions of
the operator $A_\nu$.

\begin{theorem}\label{thm:func-1}
If $\om\in S^m_\rho$ then
$\om(A_\nu)\in\Psi^m_{\rho,0}(\Om^{1/2},\Ga_\gb)$ and
\begin{equation}\label{func-1}
\si_{\om(A_\nu)}\ \sim\ \om(|\xi|_x)+\sum_{j=1}^\infty
c_{j,\nu}(x,\xi)\,\om^{(j)}(|\xi|_x)\,,\qquad|\xi|\to\infty,
\end{equation}
where $\om^{(j)}:=\partial_s^j\om$ and $c_{j,\nu}(x,\xi)\in
S^0_{1,0}$. The functions $c_{j,\nu}$ are determined recursively by
the equations
\begin{equation}\label{func-2}
\sigma_{A^k_{\nu}}(x,\xi) = |\xi|^k_{x} + \sum_{j=1}^{k}
\frac{k!}{(k-j)!} |\xi|_x^{k-j}\,c_{j,\nu}(x,\xi).
\end{equation}
\end{theorem}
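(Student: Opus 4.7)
The argument splits into two main parts: constructing the coefficients $c_{j,\nu}$ from \eqref{func-2} and then verifying the expansion \eqref{func-1} for a general $\om\in S^m_\rho$.

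\textbf{Part~I.} Iterated application of Theorem~\ref{thm:global-1} (using condition~(3) with $A_\nu\in\Psi^1_{1,0}(\Om^{1/2},\Ga_\gb)$) shows that each power $A_\nu^k$ lies in $\Psi^k_{1,0}(\Om^{1/2},\Ga_\gb)$ with symbol computable inductively from the composition formula \eqref{global-3}. I would then define $c_{j,\nu}$ by induction on $j$, solving \eqref{func-2} at $k=j$:
$$
j!\,c_{j,\nu}\ =\ \sigma_{A_\nu^j}\ -\ |\xi|_x^j\ -\ \sum_{i=1}^{j-1}\frac{j!}{(j-i)!}\,|\xi|_x^{j-i}\,c_{i,\nu}\,.
$$
A routine induction, using that the subprincipal part of $\sigma_{A_\nu^j}$ produced by \eqref{global-3} factors out precisely the right powers of $|\xi|_x$, shows $c_{j,\nu}\in S^0_{1,0}$. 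The essential consistency check is that the same sequence $\{c_{j,\nu}\}$ satisfies \eqref{func-2} for \emph{every} $k$, not only for the defining case $k=j$. This I would prove by induction on $k$ via $A_\nu^{k+1}=A_\nu^k\cdot A_\nu$ and \eqref{global-3}; the key point is that the covariant constancy of $|\xi|_x$ with respect to $\Ga_\gb$ kills the cross-terms in the composition formula that would otherwise destroy the prescribed ansatz.

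\textbf{Part~II.} Each term $c_{j,\nu}(x,\xi)\,\om^{(j)}(|\xi|_x)$ belongs to $S^{m-j\rho}_{\rho,0}(\Ga_\gb)$: $c_{j,\nu}\in S^0_{1,0}$, $\om^{(j)}\in S^{m-j\rho}_\rho$ by \eqref{func-0}, and the covariant constancy of $|\xi|_x$ supplies the required horizontal derivative bounds. By the standard asymptotic summation, there is therefore a PDO $B\in\Psi^m_{\rho,0}(\Om^{1/2},\Ga_\gb)$ whose symbol is the right-hand side of \eqref{func-1} modulo $S^{-\infty}$. To identify $B$ with $\om(A_\nu)$ modulo $\Psi^{-\infty}$, I would represent $\om(A_\nu)$ as a $\bar\partial\tilde\om$-weighted integral of the resolvent $(z-A_\nu)^{-1}$ via the Helffer--Sj\"ostrand formula, where $\tilde\om$ is an almost analytic extension of $\om$; the resolvent is a PDO whose symbol is computed from $(z-\sigma_{A_\nu})^{-1}$ by \eqref{global-3} and is a Neumann-type series in $(z-|\xi|_x)^{-1}$ built from the same $c_{j,\nu}$, so that integrating term by term against $\bar\partial\tilde\om$ reproduces the expansion in \eqref{func-1}.

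\textbf{Main obstacle.} The delicate step is the consistency check in Part~I --- showing that the single sequence $\{c_{j,\nu}\}$ determined from the individual symbols $\sigma_{A_\nu^j}$ is simultaneously compatible with the entire infinite family of equations \eqref{func-2}. This is essentially algebraic bookkeeping with the composition formula \eqref{global-3}, and it is where the geometry enters in an essential way via the covariant constancy of $|\xi|_x$ under the Levi--Civita connection. Once this is established, Part~II is a fairly routine functional-calculus argument on symbol classes.
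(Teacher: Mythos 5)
Your route is genuinely different from the paper's, so let me first record what the paper actually does: it writes $\om(A_\nu)=(2\pi)^{-1}\int\widehat{\om_\tau}(t)\,e^{itA_\nu^\tau}\,\dr t$ with $\om_\tau(s)=\om(s^{1/\tau})$, splits the integral by a cutoff $\varsigma$ supported near $t=0$, disposes of the far part by integrating by parts in $t$ (each integration gains a negative power of $A_\nu$, so that part is smoothing), and treats the near part via Lemma \ref{lem:func-1}, which says that $e^{itA_\nu^\tau}$ is a PDO with symbol $e^{it|\xi|_x^\tau}b^{(\tau)}(t,x,\xi)$. Taylor expansion of $\varsigma(t)b^{(\tau)}(t,x,\xi)$ at $t=0$ then yields \eqref{func-1} with coefficients that are manifestly independent of $\om$, and \eqref{func-2} is read off afterwards by substituting $\om(s)=s^k$. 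Thus in the paper the hard analysis is concentrated in Lemma \ref{lem:func-1}, and \eqref{func-2} is a corollary of \eqref{func-1}, not the definition of the $c_{j,\nu}$.

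Two problems with your version. First, the step you call the main obstacle is not an obstacle at all: the system \eqref{func-2} is triangular, the equation for a given $k$ containing exactly one coefficient not determined by the equations for $k'<k$, namely $c_{k,\nu}$ with the invertible factor $k!$. There is one equation per unknown, so ``compatibility with the entire infinite family'' holds by construction and there is nothing to check. The substantive points in Part~I are (i) that $c_{j,\nu}\in S^0_{1,0}$, for which you correctly invoke $\nabla_x|\xi|_x=0$ but also need the degree bounds $d^{(\ka)}_{\be,\ga}\le(|\be|+|\ga|)/3$ valid for the symmetric Levi--Civita connection (the cruder bound $\min\{|\be|,|\ga|\}$ leaves terms of order $1$ in $\si_{A_\nu^2}-\si_{A_\nu}^2$), and (ii) the consistency that actually matters, namely that the coefficients defined by \eqref{func-2} are the ones appearing in \eqref{func-1} for \emph{general} $\om$ --- which is your Part~II. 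Second, Part~II is where the whole content of the theorem lies, and your sketch conceals the work. The Helffer--Sj\"ostrand route needs a parametrix for $(z-A_\nu)^{-1}$ in the coordinate-free classes that is uniform as $\im z\to0$ on the support of $\bar\partial\tilde\om$ (which for $\om\in S^m_\rho$ shrinks like $\langle\re z\rangle^\rho$); Theorem \ref{thm:global-1} carries no such parameter dependence, so you would have to construct $z$-dependent symbol classes and re-prove the composition formula with uniform remainders --- an effort comparable to the paper's Section \ref{S:proj}, and precisely where small $\rho$ bites. You also assert without argument that the resolvent expansion is ``built from the same $c_{j,\nu}$''; that identification requires its own proof (say, via the Cauchy formula applied to the powers $A_\nu^k$). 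None of this is unfixable --- the resolvent approach is standard and can be pushed through --- but as written you have replaced the paper's one hard input (Lemma \ref{lem:func-1}) by an unproven input of at least comparable difficulty.
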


\begin{proof}[Sketch of proof] Define $\om_\tau(s)=\om(s^{1/\tau})$, and let
$\widehat{\om_\tau}(t)$ be the Fourier transform of $\om_\tau$. Then
$$
\omega(A_{\nu}) = (2\pi)^{-1}\int
\widehat{\omega_{\tau}}(t)\,e^{itA_{\nu}^{\tau}} \, \dr t.
$$
Let $\varsigma \in C^\infty_0(\R)$ be equal to $1$ in a
neighbourhood of the origin and have support contained in a small
neighbourhood of the origin. Consider the operators
$$
\omega_{1}(A_{\nu}) =(2\pi)^{-1}\int\varsigma(t)\,
\widehat{\omega_{\tau}}(t)\, e^{itA_{\nu}^{\tau}} \, \dr t,
$$
$$
\omega_{2}(A_{\nu})
=(2\pi)^{-1}\int(1-\varsigma(t))\,\widehat{\omega_{\tau}}(t)\,e^{itA_{\nu}^{\tau}}
\, \dr t.
$$
By integration by parts, the operator $\omega_{2}(A_{\nu})$ can be
written as
$$
\omega_{2}(A_{\nu}) = (2\pi)^{-1}\,A^{-k}_\nu \int D^{k}_{t}
\left((1-\varsigma(t))\,\widehat{\omega_{\tau}}(t)\right)
e^{itA_{\nu}^{\tau}} \, \dr t.
$$
Since $k$ may be chosen arbitrarily large, this shows that
$\omega_{2}(A_{\nu})$ has an infinitely smooth kernel. By Lemma
\ref{lem:func-1}, the operator $\omega_{1}(A_{\nu})$ is a PDO whose
symbol coincides with
$$
(2\pi)^{-1}\int_{-\infty}^{\infty} \varsigma(t)\,
\widehat{\omega_{\tau}}(t)\,e^{it|\xi|_x^\tau}\,b^{(\tau)}(t,x,\xi)\,
\dr t\,.
$$
Expanding $\varsigma(t)b^{(\tau)}(t,x,\xi)$ by Taylor's formula at
$t=0$, we see that the symbol of $\omega_{1}(A_{\nu})$ admits an
asymptotic expansion of the form \eqref{func-1} with some functions
$c_{j,\nu}$. These functions do not depend on $\om$ and can be found
by substituting $\om(s)=s^k$ with $k=1,2,\ldots$ This leads to
\eqref{func-2}.
\end{proof}

\begin{definition}\label{def:func-2}
If $\rho\in(0,1]$, let $S^m_\rho(\gb)$ be the class of
$C^\infty$-functions on $T^*M$ which admit asymptotic expansions of
the form
\begin{equation}\label{func-3}
a(x,\xi)\ \sim\ \sum_{j=0}^\infty
c_j(x,\xi)\,\om_j(|\xi|_x)\,,\qquad|\xi|\to\infty,
\end{equation}
where $c_j\in S^0_{1,0}$, $\om_j\in S^{m_j}_\rho$ with $m_0=m$ and $m_j\to-\infty$. Denote by $\Psi^m_\rho(\Om^{1/2},\gb)$ the class of PDOs acting in the space of half-densities whose $\Ga_\gb$-symbols belong to $S^m_\rho(\gb)$.
\end{definition}

Theorem \ref{thm:func-1} immediately implies that
$\om(A_\nu)\in\Psi^m_\rho(\Om^{1/2},\gb)$ whenever $\om\in
S^m_\rho$. The other way round, any PDOs of class
$\Psi^m_\rho(\Om^{1/2},\gb)$ can be represented in terms of
functions of the operator $A_\nu$.

\begin{lemma}\label{lem:func-2}
For each $A\in\Psi^m_\rho(\Om^{1/2},\gb)$ there exist PDOs
$C_{j,\nu}\in\Psi^0_{1,0}$ and functions $\tilde\om_j\in
S^{l_j}_\rho$ such that $l_0=m$, $l_j\to-\infty$ and
\begin{equation}\label{func-4}
A\ \sim\ \sum_{j=0}^\infty C_{j,\nu}\,\tilde\om_j(A_\nu)\,,
\end{equation}
where the $\,\sim\,$ sign means that the Schwartz kernel of the
difference $A-\sum_{j=0}^kC_{j,\nu}\,\tilde\om_j(A_\nu)$ becomes
smoother and smoother as $k\to\infty$.
\end{lemma}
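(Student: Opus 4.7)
The plan is to invert the construction of Theorem \ref{thm:func-1}, cancelling successive terms in the $\Ga_\gb$-symbol of $A$ by compositions of the form $C_{j,\nu}\,\tilde\om_j(A_\nu)$ and then summing the resulting series asymptotically.

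Write the $\Ga_\gb$-symbol of $A$ as
$$
\si_A(x,\xi)\ \sim\ \sum_{j\geq 0}c_j(x,\xi)\,\om_j(|\xi|_x),
$$
with $c_j\in S^0_{1,0}$, $\om_j\in S^{m_j}_\rho$, $m_0=m$ and $m_j\to-\infty$. First I would pick any $C_{0,\nu}\in\Psi^0_{1,0}$ whose symbol coincides with $c_0$ modulo $S^{-\infty}$, and set $\tilde\om_0=\om_0$. Theorem \ref{thm:func-1} gives $\tilde\om_0(A_\nu)\in\Psi^m_{\rho,0}(\Om^{1/2},\Ga_\gb)$ with $\Ga_\gb$-symbol of the form $\om_0(|\xi|_x)+\sum_{k\geq 1}c_{k,\nu}(x,\xi)\,\om_0^{(k)}(|\xi|_x)$, and since $C_{0,\nu}\in\Psi^0_{1,0}$, case~(3) of Theorem \ref{thm:global-1} applies, so the composition $C_{0,\nu}\tilde\om_0(A_\nu)$ is again a PDO whose symbol is given by \eqref{global-3}.

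The central observation is that every term produced by \eqref{global-3} is of the form $b(x,\xi)\,\om_0^{(k)}(|\xi|_x)$ with $b\in S^0_{1,0}$ and $k\geq 0$, so in particular $C_{0,\nu}\tilde\om_0(A_\nu)$ belongs to $\Psi^m_\rho(\Om^{1/2},\gb)$. Indeed, $\xi$-differentiation of $\om_0^{(k)}(|\xi|_x)$ produces, by the chain rule, finite sums of $\om_0^{(k+j)}(|\xi|_x)$ multiplied by symbols that are smooth in $\xi$ off the zero section; the horizontal differentiations $\nabla_x^\al$ annihilate every factor $\om_0^{(k)}(|\xi|_x)$, since $|\xi|_x$ is horizontally constant for the Levi--Civita connection, so they act only on the coefficients $c_{k,\nu}\in S^0_{1,0}$; and the polynomial $P^{(\ka)}_{\be,\ga}$ together with $D^{\al+\be}_\xi c_0$ stays in $S^0_{1,0}$. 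The unique term of top order $m$ is $c_0(x,\xi)\,\om_0(|\xi|_x)$, matching the leading term of $\si_A$. Consequently $A_1:=A-C_{0,\nu}\tilde\om_0(A_\nu)$ lies in $\Psi^{l_1}_\rho(\Om^{1/2},\gb)$ for some $l_1<m$, and its $\Ga_\gb$-symbol again has an asymptotic expansion of the form \eqref{func-3}.

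Iterating the construction yields sequences $C_{j,\nu}\in\Psi^0_{1,0}$ and $\tilde\om_j\in S^{l_j}_\rho$ with $l_0=m$, $l_j\to-\infty$ and
$$
A\ -\ \sum_{j=0}^{k-1}C_{j,\nu}\tilde\om_j(A_\nu)\ \in\ \Psi^{l_k}_\rho(\Om^{1/2},\gb)
$$
for every $k$. A standard asymptotic summation (for instance, multiplying each $\tilde\om_j$ by a cut-off $\chi(s/t_j)$ supported near infinity with $t_j$ growing sufficiently fast) then produces a single sum satisfying \eqref{func-4}. The main obstacle is the structural step of the preceding paragraph: showing that the composition formula \eqref{global-3} combined with Theorem \ref{thm:func-1} does not generate any terms outside the class of products $b(x,\xi)\,\om(|\xi|_x)$ with $b\in S^0_{1,0}$ and $\om\in S^l_\rho$. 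This relies crucially on the compatibility between $\Ga_\gb$ and the function $|\xi|_x$, i.e.\ on the fact that $|\xi|_x$ is horizontally constant, which is precisely what motivated the introduction of the class $S^m_\rho(\gb)$ in Definition \ref{def:func-2}.
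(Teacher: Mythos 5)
Your proposal is correct and follows essentially the same route as the paper: peel off the leading term with $C_{0,\nu}=\mathrm{Op}(c_0)$ and $\tilde\om_0=\om_0$, use Theorem \ref{thm:func-1} together with case (3) of Theorem \ref{thm:global-1} to see that the remainder stays in $\Psi^{l_1}_\rho(\Om^{1/2},\gb)$ with $l_1\leq\max\{m_1,m-\rho\}$, and iterate; your structural observation that the composition formula only produces terms $b(x,\xi)\,\om^{(k)}(|\xi|_x)$ with $b\in S^0_{1,0}$ (because $|\xi|_x$ is horizontally constant for $\Ga_\gb$) is exactly the point the paper's sketch relies on implicitly. The final asymptotic-summation step is not actually needed, since the $\sim$ in \eqref{func-4} is defined directly in terms of the partial sums, which your iteration already controls.
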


\begin{proof}[Sketch of proof]
Assume that \eqref{func-3} holds and denote by $C_0$ the PDO with
symbol $c_0(x,\xi)$. Theorems \ref{thm:global-1} and \ref{thm:func-1}
imply that $A=C_0\,\om_0(A_\nu)+A_\nu^{(1)}$ where
$A_\nu^{(1)}\in\Psi^{l_1}_\rho(\Om^{1/2},\gb)$ with
$l_1\leq\max\{m_1,m_0-\rho\}$. The same arguments show that
$A_\nu^{(1)}=C_{1,\nu}\,\tilde\om_1(A_\nu)+A_\nu^{(2)}$ where
$C_{1,\nu}\in\Psi^0_{1,0}$, $\tilde\om_1\in S^{l_1}_\rho$ and
$A_\nu^{(2)}\in \Psi^{l_2}_\rho(\Om^{1/2},\gb)$ where
$l_2\leq\max\{m_2,l_1-\rho\}$. Repeatedly applying this procedure, we
obtain a sequence of operators
$A_\nu^{(k)}\in\Psi^{l_k}_\rho(\Om^{1/2},\gb)$ such that
$A-A_\nu^{(k)}=\sum_{j=0}^{k-1}C_{j,\nu}\,\tilde\om_j(A_\nu)$, where
$C_{j,\nu}$ and $\tilde\om_j$ satisfy the required conditions and
$l_k\to-\infty$ as $k\to\infty$.
\end{proof}

Since $\om_1(A_\nu)\,\om_2(A_\nu)=\om_1\om_2\,(A_\nu)$ for any two
functions $\om_1$ and $\om_2$, combining Theorem \ref{thm:global-1}
and Lemma \ref{lem:func-2}, we obtain

\begin{corollary}\label{cor:func-1}
If $A\in\Psi^{m_1}_\rho(\Om^{1/2},\gb)$ and
$B\in\Psi^{m_2}_\rho(\Om^{1/2},\gb)$ then the composition $AB$ is a
PDO of class $\Psi^{m_1+m_2}_\rho(\Om^{1/2},\gb)$ whose symbol
admits the asymptotic expansion \eqref{global-3}.
\end{corollary}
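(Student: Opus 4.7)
The plan is to exploit Lemma~\ref{lem:func-2} to reduce the composition to pairwise products in which at least one factor lies in $\Psi^0_{1,0}$, so that case~(3) of Theorem~\ref{thm:global-1} applies for all $\rho\in(0,1]$. The crucial algebraic ingredient is the commutativity identity $\om_1(A_\nu)\om_2(A_\nu)=(\om_1\om_2)(A_\nu)$ noted just before the corollary.

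First I would apply Lemma~\ref{lem:func-2} to write
\[
A\ \sim\ \sum_i C_{i,\nu}\,\tilde\om_i(A_\nu),\qquad B\ \sim\ \sum_j D_{j,\nu}\,\tilde\om'_j(A_\nu),
\]
with $C_{i,\nu},D_{j,\nu}\in\Psi^0_{1,0}$, $\tilde\om_i\in S^{l_i}_\rho$, $\tilde\om'_j\in S^{l'_j}_\rho$, and $l_i,l'_j\to-\infty$. Substituting into $AB$ and using
\[
\tilde\om_i(A_\nu)\,D_{j,\nu}\ =\ D_{j,\nu}\,\tilde\om_i(A_\nu)\,+\,\bigl[\tilde\om_i(A_\nu),\,D_{j,\nu}\bigr]
\]
produces the principal contribution $\sum_{i,j} C_{i,\nu}D_{j,\nu}(\tilde\om_i\tilde\om'_j)(A_\nu)$ together with commutator corrections. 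The principal term lies in $\Psi^{m_1+m_2}_\rho(\Om^{1/2},\gb)$: the product $C_{i,\nu}D_{j,\nu}\in\Psi^0_{1,0}$ by the standard H\"ormander calculus, the product of scalar functions $\tilde\om_i\tilde\om'_j$ belongs to $S^{l_i+l'_j}_\rho$ by the Leibniz rule applied to~\eqref{func-0}, and $(\tilde\om_i\tilde\om'_j)(A_\nu)\in\Psi^{l_i+l'_j}_\rho(\Om^{1/2},\gb)$ by Theorem~\ref{thm:func-1}. Left-multiplication by an element of $\Psi^0_{1,0}$ preserves the class: in~\eqref{global-3} applied to a symbol of the special form $\sum c_k(x,\xi)\om_k(|\xi|_x)$, horizontal derivatives hit only the $c_k$'s (since $\nabla_x|\xi|_x=0$), and vertical derivatives expand by Fa\`a di Bruno into finite sums of products of factors in $S^0_{1,0}$ with derivatives $\om_k^{(j)}(|\xi|_x)\in S^{l_i-j\rho}_\rho$.

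The residual commutators are dealt with by iteration. By Theorem~\ref{thm:global-1}(3), $\bigl[\tilde\om_i(A_\nu),D_{j,\nu}\bigr]\in\Psi^{l_i-\rho}_{\rho,0}(\Om^{1/2},\Ga_\gb)$; a further application of Lemma~\ref{lem:func-2} returns it to the canonical form $\sum\Psi^0_{1,0}\cdot(\text{function of }A_\nu)$, and the procedure is repeated. Since each iteration drops the order by at least $\rho>0$, after finitely many steps the remainder has arbitrarily small order, and one concludes $AB\in\Psi^{m_1+m_2}_\rho(\Om^{1/2},\gb)$. The asymptotic expansion~\eqref{global-3} for $\si_{AB}$ is then read off by collecting, order by order, the contributions of the principal terms and the successive commutator corrections: at each iteration the correction extracted is itself given by~\eqref{global-3}, so the accumulated symbol coincides with the formal right-hand side of~\eqref{global-3} applied directly to $\si_A$ and $\si_B$.

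The hardest step I expect is this final bookkeeping. For $\rho\le 1/3$ the series~\eqref{global-3} is not a priori an asymptotic expansion in the coarse class $S^\cdot_{\rho,0}(\Ga_\gb)$, since the estimate $d^{(1/2)}_{\be,\ga}\le(|\be|+|\ga|)/3$ is only borderline; the argument must show that within the finer class $S^\cdot_\rho(\gb)$ the horizontal invariance of $|\xi|_x$ provides enough additional decay---essentially because $\nabla_x^\al$ acts only on the $S^0_{1,0}$-factors of the expansion of $\si_B$ and introduces no growth---for the terms of~\eqref{global-3}, once regrouped by total differentiation order, to form an honest asymptotic series whose sum is precisely the symbol produced by the recursive commutator construction.
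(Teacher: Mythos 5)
Your proposal is correct and follows essentially the same route as the paper, whose proof is the single observation that the corollary follows by combining Lemma~\ref{lem:func-2} (decomposition into $\sum C_{j,\nu}\,\tilde\om_j(A_\nu)$ with $C_{j,\nu}\in\Psi^0_{1,0}$), the multiplicativity $\om_1(A_\nu)\om_2(A_\nu)=(\om_1\om_2)(A_\nu)$, and case~(3) of Theorem~\ref{thm:global-1}; you have merely written out the iteration and bookkeeping explicitly. Your closing worry about whether \eqref{global-3} is an honest asymptotic series is exactly the point of Remark~\ref{rem:func-1}, where the paper attributes the cancellation of the apparently growing terms to symmetries of the curvature tensor and notes that a direct proof avoiding Lemma~\ref{lem:func-2} remains open.
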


\begin{remark}\label{rem:func-1}
Under the conditions of Corollary \ref{cor:func-1}, the estimates on
$d^{(\ka)}_{\be,\ga}$ obtained in Section \ref{S:global} do not
directly imply that \eqref{global-3} is an asymptotic series, as it
seems to contain terms of growing orders. However, these ``bad''
terms cancel out due to the symmetries of the curvature tensor. It
would be interesting to find a direct proof of Corollary
\ref{cor:func-1}, which does not use Lemma \ref{lem:func-2} (a
relevant problem was mentioned in Remark \ref{rem:global-4}).
\end{remark}

From the above results it follows that the restriction of the
operator $\om(A_\nu)$ to an open subset of $M$ is determined modulo
$\Psi^{-\infty}$ by the restrictions of the metric $\gb$ and the
operator $\nu$ to this subset. More precisely, we have the following

\begin{corollary}\label{cor:func-2}
Let $\upsilon\in C_0^\infty(M)$, and let $\{\upsilon\}$ be the corresponding
multiplication operator. Consider the operator $\tilde A_{\tilde\nu}$
generated by another metric $\tilde\gb$ and another first order PDO
$\tilde\nu$. If $\tilde\gb=\gb$ on the support of the function $\upsilon$
and $\,\tilde\nu\,\{\upsilon\}=\nu\,\{\upsilon\}\,$ then
$$
\{\upsilon\}\bigl(\om(A_\nu)-\om(\tilde
A_{\tilde\nu})\bigr)\in\Psi^{-\infty} \quad\text{and}\quad
\bigl(\om(A_\nu)-\om(\tilde
A_{\tilde\nu})\bigr)\{\upsilon\}\in\Psi^{-\infty}
$$
for every $\om\in S^m_\rho$.
\end{corollary}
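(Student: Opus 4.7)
The plan is to apply Theorem \ref{thm:func-1} to both $\om(A_\nu)$ and $\om(\tilde A_{\tilde\nu})$ and to show that their $\Ga_\gb$--symbols coincide modulo $S^{-\infty}$ on an open neighbourhood $W$ of $\text{supp}(\upsilon)$ on which $\gb=\tilde\gb$ (with the Schwartz kernels of $\nu$ and $\tilde\nu$ coinciding on $M\times W$, which follows from $\tilde\nu\{\upsilon\}=\nu\{\upsilon\}$). Because the Levi--Civita connection is a local differential concomitant of the metric, $\Ga_\gb=\Ga_{\tilde\gb}$ and $|\xi|_x$ coincide on $W$, so the two operators belong to a common class $\Psi^m_\rho(\Om^{1/2},\gb)$ there and the leading terms $\om(|\xi|_x)$ in the expansion \eqref{func-1} are identical on $W$. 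What remains is to show that the coefficients $c_{j,\nu}$ and $c_{j,\tilde\nu}$ coincide modulo $S^{-\infty}$ on $W$ for every $j\ge1$.

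By the recursive identities \eqref{func-2}, each $c_{j,\nu}$ is a polynomial expression in $|\xi|_x$ and the symbols $\si_{A_\nu^k}$, $k\le j$, so it suffices to compare $\si_{A_\nu^k}$ and $\si_{\tilde A_{\tilde\nu}^k}$ on $W$ modulo $S^{-\infty}$. For even $k=2m$ this is immediate: $A_\nu^{2m}=(-\Delta+\nu)^m$ is a differential operator whose coefficients on $W$ coincide exactly with those of $(-\tilde\Delta+\tilde\nu)^m$. For odd $k=2m+1$ I would factor $A_\nu^{2m+1}=A_\nu^{2m}\cdot A_\nu$ and invoke Theorem \ref{thm:global-1}, thereby reducing the task to proving $\si_{A_\nu}\equiv\si_{\tilde A_{\tilde\nu}}$ on $W$ modulo $S^{-\infty}$. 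I would obtain this last identity by constructing $\si_{A_\nu}$ as the unique solution modulo $S^{-\infty}$ of $\si_{A_\nu\cdot A_\nu}\sim\si_{-\Delta+\nu}$: expanding both sides via the composition formula \eqref{global-3} and equating terms of decreasing homogeneity yields a recursion whose $k$-th step expresses a component of $\si_{A_\nu}(x,\xi)$ as a rational function of $|\xi|_x$, of previously determined components, and of finitely many horizontal derivatives at $x$ of $\si_{-\Delta+\nu}$. All of these inputs depend only on the germ of $(\gb,\nu)$ at $x\in W$, so the construction yields the same germ modulo $S^{-\infty}$ for $A_\nu$ and $\tilde A_{\tilde\nu}$.

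Substituting these identifications back into \eqref{func-1} and \eqref{func-2} gives $\si_{\om(A_\nu)}\equiv\si_{\om(\tilde A_{\tilde\nu})}$ on $W$ modulo $S^{-\infty}$. Consequently the Schwartz kernel of $\om(A_\nu)-\om(\tilde A_{\tilde\nu})$ is smooth on a neighbourhood of the diagonal over $W$, and smooth off the diagonal everywhere by condition \textbf{(c)}; since $W$ is an open neighbourhood of $\text{supp}(\upsilon)$ and $\{M\setminus\text{supp}(\upsilon),W\}$ covers $M$, left multiplication by $\{\upsilon\}$ yields a kernel in $C^\infty(M\times M)$, proving the first assertion. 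The second assertion follows by taking adjoints: $A_\nu,\tilde A_{\tilde\nu}$ are self-adjoint, $\bar\om\in S^m_\rho$, and $(\{\upsilon\}B)^*=B^*\{\upsilon\}$. The main obstacle I anticipate is justifying the locality of the square-root symbol construction: one must verify that, because the composition formula \eqref{global-3} involves only horizontal derivatives at a single point, the germ of $\si_{A_\nu}$ at $x\in W$ depends only on the germ of $(\gb,\nu)$ at $x$, so that modifications of the data outside $W$ do not alter $\si_{A_\nu}|_W$ modulo $S^{-\infty}$.
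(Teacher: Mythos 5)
Your overall strategy is sound and reaches the right conclusion, but it is a genuinely different route from the paper's. The paper's proof is a two-line application of the machinery already built: $\{\upsilon\}$ is itself a PDO of class $\Psi^0_\rho(\Om^{1/2},\gb)$ with symbol $\upsilon(x)$, so by Lemma \ref{lem:func-2} and Corollary \ref{cor:func-1} each product is a PDO of the calculus whose full symbol is given by \eqref{global-3}; since $\si_{\{\upsilon\}}$ does not depend on $\xi$, every term of that expansion contains a derivative of $\upsilon$ at the point $x$ multiplied by a derivative of $\si_{\om(A_\nu)}-\si_{\om(\tilde A_{\tilde\nu})}$ at the same point, and these products vanish identically. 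You instead compare the two symbols directly through Theorem \ref{thm:func-1} and the recursion \eqref{func-2}, which obliges you to prove that $\si_{A_\nu}$ is local modulo $S^{-\infty}$ via a square-root parametrix recursion. That locality statement is precisely what the paper asserts without proof in the sentence preceding the corollary, so your argument supplies a detail the paper leaves implicit; the price is that you must then also decide \emph{where} the two symbols agree, a question the paper's pointwise composition formula sidesteps.

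Two technical slips. First, you work on ``an open neighbourhood $W$ of $\operatorname{supp}(\upsilon)$ on which $\gb=\tilde\gb$''; the hypothesis gives equality only on $\operatorname{supp}(\upsilon)$ itself, and no such $W$ need exist. The fix is to establish the symbol identity on the open set $\{\upsilon\ne0\}$ (each of whose points has a neighbourhood contained in the agreement set), extend it to $\operatorname{supp}(\upsilon)=\overline{\{\upsilon\ne0\}}$ by continuity of the symbols and all their derivatives, and then note that every term of \eqref{global-3} carries a derivative of $\upsilon$ at $x$ and hence vanishes for $x\notin\operatorname{supp}(\upsilon)$; no covering of $M$ by a genuine neighbourhood of the support is needed. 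Second, $(-\Delta+\nu)^m$ is not a differential operator, because $\nu$ is only assumed to be a first-order PDO; the even-power case is therefore not ``immediate'' and must be handled by the same locality-plus-composition argument as the odd powers (which costs nothing extra).
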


\begin{proof}[Sketch of proof]
The multiplication operator $\{\upsilon\}$ is a PDO with symbol $\upsilon(x)$,
which belongs to $\Psi^0_\rho(\Om^{1/2},\gb)$. Applying Lemma
\ref{lem:func-2} and Corollary \ref{cor:func-1}, we see that
$\,\{\upsilon\}\bigl(\om(A_\nu)-\om(\tilde A_{\tilde\nu})\bigr)\,$ and
$\,\bigl(\om(A_\nu)-\om(\tilde A_{\tilde\nu})\bigr)\{\upsilon\}\,$ are PDOs
whose full symbols are identically equal to zero.
\end{proof}

\begin{remark}\label{rem:func-2}
In a similar way, it is possible to define the classes
$\Psi^m_\rho(\Om^{\ka},\gb)$ which consist of PDOs acting in the
space of $\ka$-densities. Theorem \ref{thm:global-1} implies that
$A\in\Psi^m_\rho(\Om^{\ka},\gb)$ if and only if
$g^{1/2-\ka}Ag^{\ka-1/2}\in\Psi^m_\rho(\Om^{1/2},\gb)$. Using this
observation, one can easily reformulate all results of this section
for operators $A\in\Psi^m_\rho(\Om^{\ka},\gb)$.
\end{remark}

\section{An approximate spectral projection}\label{S:proj}

In applications, one often has to deal with functions of an operator
which depend on additional parameters. It is more or less clear that
the results of the previous section can be extended to
parameter-dependent functions $\om$ under the assumption that the
estimates \eqref{func-0} hold uniformly with respect to the
parameters. Therefore, instead of formulating general statements, we
shall consider an example which is of particular interest for
spectral theory.

Further on we assume that $\la>0$ and denote by $\Psi^{-\infty}(\la)$
the class of parameter-dependent operators with infinitely smooth
Schwartz kernels $\AC_\la(x,y)$ such that
$$
\lim_{\la\to\infty}\la^p\,|\partial_x^\al\partial_y^\be\AC_\la(x,y)|\
=\ 0
$$
for all multi-indices $\al,\be$ and all $p=1,2,\ldots$ Similarly, let
$S^{-\infty}(\la)$ be the class of parameter-dependent amplitudes
$a_\la$ such that
$$
(\la+|\xi|)^p\sup_{(x,y)\in M}|\partial_\xi^\al
\nabla_x^\be\nabla_y^\ga a_\la(y;x,\xi)|\ \to\ 0\quad\text{as}\
\la+|\xi|\to\infty
$$
for all multi-indices $\al,\be,\ga$ and all $p=1,2,\ldots$

Let us fix a small $\eps>0$ and a nonincreasing function $f\in
C^{\infty}(\R)$ such that
$$
f(s) =\left\{ \begin{array}{ll}
1 & \mbox{if $s \leq 0$}; \\
0 & \mbox{if $s \geq \varepsilon$}, \end{array} \right.
$$
and $0\leq f(s)\leq 1$ for all $s\in\R$. If $\rho\in(0,1]$ and
$\la>0$, let
$$
\chi_\rho(\la,s)\ :=\ f(\la^{-\rho}(s-\la))\,.
$$
For each fixed $\la>0$, the function $\chi_\rho$ vanishes on the
interval $[\la+\eps\la^\rho,\infty)$, is identically equal to 1 on
the interval $(-\infty,\la]$ and smoothly descends from 1 to 0 on
the interval $[\la,\la+\eps\la^\rho]$. Since this functions differs
from the characteristic function of the interval $[-\infty,\la]$
only on the relatively small interval $(\la,\la+\eps\la^\rho)$, the
operator $\chi(\la,A_\nu)$ can be thought of as an approximate
spectral projection of $A_\nu$ corresponding to $(-\infty,\la]$. The
standard elliptic regularity theorem implies that the operator
$\chi(\la,A_\nu)$ has an infinitely differentiable Schwartz kernel
for each fixed $\la$.

The derivatives $\partial_s^j\chi_\rho(\la,s)$ are  equal to zero
outside the interval $(\la,\la+\eps\la^\rho)$. Therefore
\begin{equation} \label{proj-1}
|\partial_s^j\chi_\rho(\la,s)|\ \leq\ \tilde
C_j\,(|s|+\la)^{-j\rho}\,,\qquad\forall j=0,1,\ldots,
\end{equation}
for all $s\in\R$ and all $\la>1$, where $\tilde C_j$ are some
constants independent of $\la$ and $s$. The same arguments as in the
proof of Theorem \ref{thm:func-1} show that $\chi(\la,A_\nu)$ is a
parameter-dependent PDO whose symbol admits the asymptotic expansion
\begin{equation}\label{proj-2}
\si_{\chi(\la,A_\nu)}\ \sim\ \chi(\la,|\xi|_x)+\sum_{j=1}^\infty
c_{j,\nu}(x,\xi)\,\chi^{(j)}(\la,|\xi|_x)\,,\qquad\la+|\xi|\to\infty,
\end{equation}
where $c_{j,\nu}(x,\xi)$ are the same functions as in \eqref{func-2}
and $\chi^{(j)}$ denotes $j$th $s$-derivative of the function $\chi$.

Note that the functions $\chi^{(j)}(\la,|\xi|_x)$ belong to
$S^{-\infty}$ for each fixed $\la$. However, their rate of decay
depends on $\la$. The asymptotic expansion \eqref{proj-2} is uniform
with respect to $\la$; it defines $\si_{\chi(\la,A_\nu)}$ modulo
$S^{-\infty}(\la)$. Substituting the terms from \eqref{proj-2} into
the integral {\bf(c$_2$)}, we obtain an asymptotic expansion of the
Schwartz kernel of $\chi(\la,A_\nu)$ into a series of infinitely
smooth half-densities, which decay more and more rapidly as
$\la\to\infty$. This expansion defines $\chi(\la,A_\nu)$ modulo
$\Psi^{-\infty}(\la)$.

Straightforward analysis of the proof of Theorem \ref{thm:global-1}
shows that it remains valid in the case where one of the operators
belongs to $\Psi^0_{1,0}$ and the other is a parameter-dependent PDO
whose symbol admits an asymptotic expansion of the form
\eqref{proj-2}. In this case \eqref{global-3} gives an expansion of
$\si_{AB}$ as $\,\la+|\xi|\to\infty\,$ and defines the symbol modulo
$S^{-\infty}(\la)$.

Now, in the same way as in Section \ref{S:func}, one can show that
the composition of parameter-dependent PDOs whose symbols admit
asymptotic expansions of the form
\begin{equation}\label{proj-3}
\si(x,\xi)\ \sim\ \sum_{j=0}^\infty
c_j(x,\xi)\,\chi^{(j)}(\la,|\xi|_x)\,,\qquad\la+|\xi|\to\infty,\qquad
c_j\in\Psi^0_{1,0}
\end{equation}
is also a parameter-dependent PDO whose symbol is given by
\eqref{global-3} modulo $S^{-\infty}(\la)$.

Let $\Pi_\nu(\la)$ be the spectral projection of the operator
$A_\nu$ corresponding to the interval $(-\infty,\la)$. The above
results imply the following

\begin{theorem}\label{thm:proj-1}
Let $\upsilon\in C_0^\infty(M)$, and let $\{\upsilon\}$ be the corresponding
multiplication operator. Consider the spectral projections
$\Pi_\nu(\la)$ and $\tilde\Pi_{\tilde\nu}(\la)$ generated by
different metrics $\gb$, $\tilde\gb$ and different first order PDOs
$\,\nu$, $\tilde\nu\,$ satisfying the conditions of Section
{\rm\ref{S:func}}. If $\,\tilde\gb=\gb\,$ on the support of the
function $\upsilon$ then
\begin{equation}\label{proj-4}
\Pi_\nu(\la)\,\{\upsilon\}\,\bigl(I-\tilde\Pi_{\tilde\nu}(\la+c\la^\rho)\bigr)\
\in\ \Psi^{-\infty}(\la)\,,\qquad\forall\,c,\rho>0.
\end{equation}
\end{theorem}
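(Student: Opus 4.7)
The strategy is to bracket each sharp spectral projection between smooth approximate projections of the form $\chi_\rho$ so that the parameter-dependent composition calculus from Section \ref{S:proj} applies, and then to exploit the disjointness of the supports of the resulting $\chi$-functions. Fix $c',\eps>0$ with $\eps<c'$ and $c'+\eps<c$. Since $\chi_\rho(\la,s)=1$ for $s\leq\la$, the functional calculus yields $\Pi_\nu(\la)=\Pi_\nu(\la)\,\chi_\rho(\la,A_\nu)$. Similarly, the inequality $c'+\eps<c$ combined with $(\la+c'\la^\rho)^\rho/\la^\rho\to1$ guarantees that $1-\chi_\rho(\la+c'\la^\rho,s)=1$ for $s\geq\la+c\la^\rho$ whenever $\la$ is sufficiently large, so
\[
I-\tilde\Pi_{\tilde\nu}(\la+c\la^\rho)\,=\,\bigl(1-\chi_\rho(\la+c'\la^\rho,\tilde A_{\tilde\nu})\bigr)\bigl(I-\tilde\Pi_{\tilde\nu}(\la+c\la^\rho)\bigr);
\]
for the bounded range of $\la$ not covered by this, $\Pi_\nu(\la)$ is finite-rank with smooth kernel, making \eqref{proj-4} trivial. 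It therefore suffices to prove that
\[
Q_\la\,:=\,\chi_\rho(\la,A_\nu)\,\{\upsilon\}\,\bigl(1-\chi_\rho(\la+c'\la^\rho,\tilde A_{\tilde\nu})\bigr)\,\in\,\Psi^{-\infty}(\la),
\]
since left multiplication by $\Pi_\nu(\la)$ and right multiplication by $I-\tilde\Pi_{\tilde\nu}(\la+c\la^\rho)$ preserve this class (both projections have smooth Schwartz kernels with at most polynomial growth in $\la$).

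Applying the parameter-dependent analogue of Corollary \ref{cor:func-2}, which follows from the uniform-in-$\la$ versions of Lemma \ref{lem:func-2} and Corollary \ref{cor:func-1} indicated in the preamble to this section together with the hypothesis $\tilde\gb=\gb$ on $\operatorname{supp}\upsilon$, we may replace $\tilde A_{\tilde\nu}$ with $A_\nu$ in the right factor of $Q_\la$ modulo $\Psi^{-\infty}(\la)$. The problem reduces to establishing
\[
Q'_\la\,:=\,\chi_\rho(\la,A_\nu)\,\{\upsilon\}\,\bigl(1-\chi_\rho(\la+c'\la^\rho,A_\nu)\bigr)\,\in\,\Psi^{-\infty}(\la).
\]
Both outer factors are parameter-dependent PDOs with symbols admitting expansions of the form \eqref{proj-3}, and the composition formula \eqref{global-3} applies to this triple product modulo $S^{-\infty}(\la)$. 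Because $\Ga_\gb$ is the Levi-Civita connection, all horizontal derivatives $\nabla_x^\al$ of $\chi_\rho(\la,|\xi|_x)$ and $\chi_\rho(\la+c'\la^\rho,|\xi|_x)$ vanish for $|\al|\geq1$, so no hidden $x$-derivative of the leading $\chi$-terms arises inside the composition formula.

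The leading term of the asymptotic expansion of $\si_{Q'_\la}$ is then $\chi_\rho(\la,|\xi|_x)\,\upsilon(x)\,\bigl(1-\chi_\rho(\la+c'\la^\rho,|\xi|_x)\bigr)$, which vanishes identically because $c'>\eps$ forces the two $\chi$-factors to have disjoint supports in $|\xi|_x$. Every lower-order term involves at least one $\xi$-derivative of one of the two $\chi$-factors, and such a derivative is supported in the thin shell $|\xi|_x\in[\la,\la+\eps\la^\rho]$ or $|\xi|_x\in[\la+c'\la^\rho,\la+(c'+\eps)\la^\rho]$; the other $\chi$-factor (or any of its derivatives) is supported in the complementary region, and so the product again vanishes identically for $c'>\eps$. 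Hence every term of the expansion of $\si_{Q'_\la}$ is zero, giving $\si_{Q'_\la}\in S^{-\infty}(\la)$ and $Q'_\la\in\Psi^{-\infty}(\la)$.

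The principal technical obstacle is the parameter-dependent analogue of Corollary \ref{cor:func-2}: the non-parametric argument must be repeated uniformly in $\la$, with the bounds \eqref{proj-1} playing the role of \eqref{func-0}. Once this is in hand, the disjoint-support bookkeeping above is elementary, and the Levi-Civita property of $\Ga_\gb$ is essential in ensuring that the symbol of $Q'_\la$ has no hidden $x$-derivatives that could disrupt the support analysis.
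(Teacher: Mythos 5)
Your overall strategy is the same as the paper's: sandwich the sharp projections between smooth cutoffs $\chi_\rho$, reduce to a composition of parameter-dependent PDOs, and kill every term of the symbol expansion by disjointness of supports in $|\xi|_x$, using polynomial bounds on the kernels of the spectral projections to reattach the outer factors. The paper implements the same idea slightly differently (it uses the second cutoff $\chi(\la,s-\eps\la^\rho)$ with $\eps<c/3$ and the identity $\chi(\la,s)=\chi(\la,s)\,\chi(\la,s-\eps\la^\rho)$, rather than your $\chi_\rho(\la+c'\la^\rho,s)$), but that difference is cosmetic.

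There is, however, one step that is genuinely wrong as stated: the appeal to (a parameter-dependent analogue of) Corollary \ref{cor:func-2} to replace $\tilde A_{\tilde\nu}$ by $A_\nu$ in the right factor of $Q_\la$ modulo $\Psi^{-\infty}(\la)$. Corollary \ref{cor:func-2} requires \emph{both} $\tilde\gb=\gb$ on $\operatorname{supp}\upsilon$ \emph{and} $\tilde\nu\,\{\upsilon\}=\nu\,\{\upsilon\}$; Theorem \ref{thm:proj-1} assumes only the first, and indeed Remark \ref{rem:proj-1} stresses that $A_\nu-\tilde A_{\tilde\nu}$ is an essentially arbitrary zero-order perturbation. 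Without the hypothesis on $\nu,\tilde\nu$ the difference $\{\upsilon\}\bigl(\chi_\rho(\mu,A_\nu)-\chi_\rho(\mu,\tilde A_{\tilde\nu})\bigr)$ is \emph{not} in $\Psi^{-\infty}(\la)$: the leading terms $\chi_\rho(\mu,|\xi|_x)$ cancel because the metrics agree on $\operatorname{supp}\upsilon$, but the correction terms $\upsilon(x)\bigl(c_{j,\nu}-c_{j,\tilde\nu}\bigr)\chi_\rho^{(j)}(\mu,|\xi|_x)$ from \eqref{proj-2} survive and decay only like $\la^{-j\rho}$. The repair is immediate and is in fact what the paper does: skip the replacement entirely and run your disjoint-support argument directly on the mixed product $\chi_\rho(\la,A_\nu)\,\{\upsilon\}\,\bigl(1-\chi_\rho(\la+c'\la^\rho,\tilde A_{\tilde\nu})\bigr)$. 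On $\operatorname{supp}\upsilon$ the two metrics give the same $|\xi|_x$, so every term of the expansion of the right factor other than the constant $1$ (including the $c_{j,\tilde\nu}$-corrections) is supported in $|\xi|_x\geq\la+c'\la^\rho$, while every term coming from the left factor is supported in $|\xi|_x\leq\la+\eps\la^\rho$ with $\eps<c'$; the contribution of the constant $1$ cancels against the leading term of $\chi_\rho(\la,A_\nu)\{\upsilon\}\chi_\rho(\la+c'\la^\rho,\tilde A_{\tilde\nu})$. With that correction your argument is sound and coincides in substance with the paper's proof.
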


\begin{proof}[Sketch of proof]
Assume that $\rho\in(0,1]$, and let $\chi(\la,s)$ be defined as
above with some $\eps<c/3$. Then $\chi(\la,s)\equiv1$ for $s\geq\la$
and $\chi(\la,s-\eps\la^\rho)\equiv0$ for $s\geq\la+c\la^\rho$. It
follows that
$$
\Pi_\nu(\la)\,\chi(\la,A_\nu)=\Pi_\nu(\la)\quad\text{and}\quad
\chi(\la,A_{\tilde\nu}-\eps\la^\rho
I)\,\bigl(I-\Pi_{\tilde\nu}(\la+c\la^\rho)\bigr)=0\,.
$$
Consequently, we have
\begin{multline}\label{proj-5}
\Pi_\nu(\la))\,\{\upsilon\}\,\bigl(I-\Pi_{\tilde\nu}(\la+c\la^\rho)\bigr)\\
=\
\Pi_\nu(\la)\,\chi(\la,A_\nu))\,\{\upsilon\}\,\bigl(I-\chi(\la,A_{\tilde\nu}-\eps\la^\rho
I)\bigr)\,\bigl(I-\Pi_{\tilde\nu}(\la+c\la^\rho)\bigr)
\end{multline}
Since $\chi(\la,s)=\chi(\la,s)\,\chi(\la,s-\eps\la^\rho)$, the
composition formula implies that
$$
\chi(\la,A_\nu))\,\{\upsilon\}\,\bigl(I-\chi(\la,A_{\tilde\nu}-\eps\la^\rho
I)\bigr)\ \in\ \Psi^{-\infty}(\la)\,.
$$
Now the required result follows from \eqref{proj-5} and the fact
that the Schwartz kernel of the spectral projection is polynomially
bounded in $\la$ with all its derivatives (see, for instance,
\cite[Section 1.8]{SV}).
\end{proof}

\begin{remark}\label{rem:proj-1}
It is not surprising that the operator in the left hand side of
\eqref{proj-4} has a lower order than the spectral projections
themselves as $\la\to\infty$. However, one would expect its norm to
decay as a fixed negative power of $\la$, since the perturbation
$A_\nu-A_{\tilde\nu}$ is a more or less arbitrary PDO of order zero.
We do not know whether \eqref{proj-4} can be obtained by other
techniques (including that of Fourier integral operators).
\end{remark}

\begin{remark}\label{rem:proj-2}
All results of this section can easily be extended to a noncompact
closed manifold $M$. In this case all the asymptotic expansions are
unform on compact subsets of $M$ and $M\times M$.
\end{remark}

\section{Other known results and possible developments}\label{S:other}

\subsection{Other definitions for scalar PDOs}\label{S:other-def}
If $\Ga$ is a linear connection, then the corresponding symbol of a
PDO $A$ can easily be recovered from the asymptotic expansion of
$A(e^{i\f_\tau(x,\zeta,y)}\chi(x,y))$ as $\zeta\to\infty$, where
$\f_\tau$ is defined by \eqref{global-2} and $\chi$ is a smooth
cut-off function or $\ka$-density (we suppose that $x$ is fixed and
that the operator acts in the variable $y$). After that, all the
standard formulae of the local theory of PDOs can be rewritten in
terms of their $\Ga$-symbols. Moreover, making appropriate
assumptions about the asymptotic behaviour of
$A(e^{i\f_\tau(x,\zeta,\cdot)}\chi(x,y))$, one can try to define
various classes of PDOs associated with the linear connection $\Ga$.

This approach was introduced and developed by Harold Widom and Lance
Drager (see \cite{Wi1}, \cite{Wi2} and \cite{Dr}). Its main
disadvantage is the absence of an explicit formula representing the
Schwartz kernel of a PDO via its symbol. As a consequence, one has
to assume that PDOs and the corresponding classes of amplitudes are
defined in local coordinates, which makes it impossible to extend
the definition to $\rho<\max\{\de,1-\rho\}$.

In \cite{Pf1}, Markus Pflaum defined a PDO in the space of functions
by the formula
\begin{equation}\label{others-1}
Au(x)\ =\ (2\pi)^{-n}\int_{T^*_xM}\int_{T_xM}\chi(x,y)\,
e^{i\f_0(x,\xi,y)}\,a(x,\xi)\,u(y)\,\dr y\,\dr\xi\,,
\end{equation}
where $a(x,\xi)$ is a function on $T^*M\,$ of class
$S^m_{\rho,\de}\,$, $\,y$ are normal coordinates centred at $x\,$ and
$\chi$ is a smooth cut-off function vanishing outside a neighbourhood
of the diagonal. He obtained asymptotic expansions for the symbols of
the adjoint operator and the composition of PDOs and, in the later
paper \cite{Pf2}, extended them to $\tau$-symbols. However, the
results in \cite{Pf1,Pf2} are stated and proved with the use of local
coordinates and, therefore, the author had to assume that
$\max\{\de,1-\rho\}<\rho$.

Recall that under this condition the standard results of the local
theory of PDOs hold, and the only advantage of a coordinate-free
calculus is that it helps to fight Problem \ref{problem-2}. A typical
example, considered in \cite{Pf1}, is the PDO with a symbol of the
form $(1+|\xi|^2)^{b(x)}$ where $b(x)$ is a smooth function on $M$.
Formally speaking, this PDO belongs only to the class $S^m_{1,\de}$
with $m=\sup_xb(x)$ and any $\de\in(0,1)$. But its properties are
determined by the values of the function $b$ at all points $x\in M$;
in a sense, this operator has a variable order depending on $x\in M$.
In such a situation, it is not sufficient to consider only the
principal symbol. One has to define a full symbol which can be done
with the use of a linear connection.

It is clear that \eqref{others-1} differs from Definition
\ref{def:global-2} only by the choice of the weight factor
$p_{\ka,\tau}$. Applying the procedure described in Remark
\ref{rem:global-2}, one can easily show that
\begin{equation}\label{others-2}
\si_A(x,\xi)\ \sim\ \sum_\al P_\al(x)\,\partial_\xi^\al
a(x,\xi)\,,\qquad|\xi|\to\infty\,,
\end{equation}
where $a(x,\xi)$ is the symbol appearing in \eqref{others-1} and
$P_\al$ are components of some tensor fields. Using
\eqref{others-2}, one can rewrite all the results obtained in
\cite{Sa2} in terms of symbols defined by \eqref{others-1}. This
shows that Pflaum's formulae can be reformulated in terms of the
horizontal derivatives $\nabla_x^\al$ and thus extended to the
classes $\Psi^m_{\rho,\de}(\Om^\ka,\Ga)$ and $\tau$-symbols.

In particular, Pflaum's composition formula can be written in the
form \eqref{global-3} with some  other polynomials $\tilde
P_{\be,\ga}^{(\ka)}$. For operators acting in the space of functions
and $\tau=0$, this result was established by Vladimir Sharafutdinov
in \cite{S1}. He chose to give a direct proof instead of deducing the
formula from \eqref{global-3} and \eqref{others-2} and, for some
reason, considered only the classes $\Psi^m_{1,0}$. Sharafutdinov
gave an alternative description of the polynomials $\tilde
P_{\be,\ga}^{(0)}$ which may be useful for obtaining more explicit
composition formulae (this investigation was continued in \cite{Ga}).
He also proved an analogue of \eqref{global-adj} in the case
$\ka=1/2$ and $\tau=0$ \cite[Theorem 6.1]{S1}.

\begin{remark}\label{rem:others-1}
From \eqref{others-2} it easily follows that the degrees of the
polynomials $\tilde P_{\be,\ga}^{(\ka)}$ admit the same estimates as
$d_{\be,\ga}^{(\ka)}$ (see Section \ref{S:global}).
\end{remark}

\subsection{Operators on sections of vector bundles}\label{S:other-bundles}

In \cite{FK,Pf2,S2,Wi2} the authors considered PDOs acting between
spaces of sections of vector bundles over $M$. In this case, in
order to construct a global symbolic calculus, it is sufficient to
define parallel displacement and horizontal curves in the induced
bundles over $T^*M$. This can be achieved by introducing linear
connections on $M$ and the vector bundles over $M$. After that the
results are stated and proved in the same way as in the scalar case
(further details and references can be found in the above papers).

A more radical approach was proposed by Cyril Levy in \cite{Le}. He
noticed that in order to develop an intrinsic calculus of PDOs one
actually needs only an exponential map, which does not have to be
associated with a linear connection. In his paper Levy assumed that
the manifold $M$ is noncompact and is provided with a global
exponential map (that is, $M$ is a manifold with linearization in the
sense of \cite{Bo}). He then defined associated maps in the induced
vectors bundles and constructed a global coordinate-free symbolic
calculus.

\begin{remark}\label{rem:others-2}
All the papers mentioned in this subsection dealt only with symbols
whose restriction to compact subsets of $M$ belong to
$S^m_{\rho,\de}$ with $\rho>\max\{\de,1-\rho\}$. It should be
possible to extend their results to $\rho<1/2$, using the technique
outlined in Section \ref{S:global}.
\end{remark}

\subsection{Noncompact manifolds}\label{S:other-noncompact}

In order to study global properties of PDOs on a noncompact manifold
$M$, one has to assume that all estimates for symbols and their
derivatives hold uniformly for all $x\in M$ (rather than only on
compact subsets of $M$, as in Definitions \ref{def:local-1} and
\ref{def:global-1}). In \cite{Ba}, Frank Baldus defined classes of
symbols and developed an intrinsic calculus of PDOs on a noncompact
manifold $M$ under the assumption that $M$ has an atlas satisfying
certain global conditions. The statements and proofs in \cite{Ba}
were given in terms of local coordinates, and global results were
obtained by considering the transition maps between coordinates
charts. It is quite possible that these results can be simplified
or/and improved under the assumption that $M$ has a global
exponential map (as in \cite{Le}).

\subsection{Other symbol classes}\label{S:other-symbols}

The paper \cite{Ba} dealt with the more general classes of symbols
$S(m,g)$ instead of $S^m_{\rho,\de}$. The classes $S(m,g)$ were
introduced by L. H\"ormander in \cite{H1} (see also \cite{H2}). They
are defined with the use of coordinates, and in each coordinate
system $S^m_{\rho,\de}$ is a particular case of $S(m,g)$. It would
be interesting to construct similar classes $S(m,g)$ associated with
a linear connection (or an exponential map) and to study the
corresponding classes of symbols and PDOs.

\begin{remark}\label{rem:others-3}
Note that the introduction of ``coordinate'' classes $S(m,g)$ does
not help to resolve Problem \ref{problem-1}. The relation between
these ``coordinate'' classes and the classes $S^m_{\rho,\de}(\Ga)$
was discussed in \cite[Remark 3.5]{Sa2}.
\end{remark}

\subsection{Operators generated by vector fields}\label{S:other-fields}

Let $\nu:=\{\nu_1,\nu_2,\ldots,\nu_n\}$ be a family of smooth vector
fields $\nu_j$ on $M$ which span $T_xM$ at every point $x\in M$.
Consider the corresponding first order differential operators
$\partial_{\nu_j}$ and denote
$$
\partial^\al_\nu:=\frac1{q!}\sum_{j_1,\ldots,j_q}
\partial_{\nu_{j_1}}\partial_{\nu_{j_2}}\ldots\partial_{\nu_{j_q}}
$$
where $q=|\al|$ and the sum is taken over all ordered sets of
indices $j_1,\ldots,j_q$ corresponding to the multi-index
$\al=(\al_1,\ldots,\al_n)$. In other words, $\partial^\al_\nu$ can
be thought of as the symmetrized composition of
$\partial_{\nu_{j_k}}$.

The family $\nu$ generates a unique curvature-free connection
$\Ga_\nu$, with respect to which all covariant derivatives of
the vector fields $\nu_j$ are identically equal to zero. The
$\Ga_\nu$-symbol of $\partial^\al_\nu$ coincides with
$\si_1^{\al_1}\ldots\si_n^{\al_n}$, where
$\si_k=\si_k(x,\xi):=\langle\nu_k,\xi\rangle$ (see \cite[Example
5.4]{Sa2}). Since the functions $\si_k$ are constant along
horizontal curves in $T^*M$ generated by the connection $\Ga_\nu$,
the operators $\partial^\al_\nu$ and their linear combinations can
be regarded as constant coefficient operators relative to the
connection $\Ga_\nu$ (or to the family of the vector fields $\nu$).

This observation was used by Eugene Shargorodsky in \cite{Sha}, where
he developed a complete theory of pseudodifferential operators
generated by a family of vector fields $\nu$. He introduced
anisotropic analogues of classes $S^m_{\rho,\de}$, proved the
composition formula for the corresponding classes of PDOs, defined
semi-elliptic operators associated with the family $\nu$, and
constructed their resolvents. All the results in \cite{Sha} were
obtained for operators acting on sections of vector bundles equipped
with linear connections (see Section \ref{S:other-bundles}).

\subsection{Operators on Lie groups }\label{S:other-lie}

In \cite{RT}, the authors defined full symbols of scalar PDOs on a
compact Lie group $M$ in terms of its irreducible representations
and developed a calculus for such symbols. It would be interesting
to compare their formulae with those obtained by introducing an
invariant linear connection $\Ga$ on $M$ and applying the methods of
\cite{Sa2} or \cite{Sha}.

\subsection{Geometric aspects and physical applications}\label{S:other-physics}

The importance of intrinsic approach in the theory of PDOs for
quantum mechanics is explained in the excellent review \cite{Fu} by
Stephen Fulling. Further discussions can be found in the PhD thesis
\cite{Gu}. Various geometric applications are considered in
\cite{BNPW} and \cite{Vo}. We refer the interested reader to the
above papers and references therein.

\subsection{Global phase functions}\label{S:other-phase}

It is worth noticing that one does not need a linear connection or
even an exponential map to define PDOs on a manifold in a
coordinate-free manner. It is sufficient to fix a globally defined
phase function satisfying certain conditions.

Namely, let $\f(x;y,\eta)$ be an infinitely differentiable function
on $M\times T^*M$ such that
$$
\im\f(x;y,\eta)\geq0\,,\quad\f(x;y,\la\eta)=\la\,\f(x;y,\eta)
$$
for all $x\in M$, $(y,\eta)\in T^*M$ and $\la>0$, and
$$
\f(x;y,\eta)\ =\ (x-y)\cdot\eta\;+\;O(|x-y|^2|\eta|)\,,\qquad x\to y\,,
$$
in any local coordinate system. If $a(x;y,\eta)$ is a smooth function
on $M\times T^*M$ such that $a\in S^m_{1,0}$ in any local coordinate system then
$$
\AC(x,y)\ :=\ \int e^{i\f(x;y,\eta)}\,a(x;y,\eta)\,\dr\eta
$$
is the Schwartz kernel of a PDO $A\in\Psi^m_{1,0}$ acting in the
space of functions. Moreover, there exists an amplitude
$a_\f(y,\eta)$ independent of $x$ such that
$$
\AC(x,y)\;-\;\int e^{i\f(x;y,\eta)}\,a_\f(y,\eta)\,\dr\eta\ \in\
C^\infty(M\times M)\,,
$$
and this amplitude $a_\f$ is uniquely defined by $A$ modulo
$S^{-\infty}$.  The operator $A$ belongs to $\Psi^m_{1,0}$ if and
only if $a_\f\in S^m_{1,0}$ in any local coordinate system.

\begin{remark}\label{rem:others-4}
For a real-valued phase function $\f$ these are standard results of
the theory of Fourier integral operators (see, for instance,
\cite[Section 19]{Shu}). Complex-valued phase functions were
considered in \cite{LSV}.
\end{remark}

It is natural to call $a_\f$ the {\it $\f$-symbol} of the operator
$A$. Clearly, all the standard results of the classical theory of
PDOs can be rewritten in terms of their $\f$-symbols. In particular,
if $A,B\in\Psi^m_{1,0}$ then the $\f$-symbol of the composition $AB$
is determined modulo $S^{-\infty}$ by an asymptotic series which
involves $\f$-symbols of $A$ and $B$ and their derivatives.
Similarly, the $\f$-symbol of the adjoint operator $A^*$ is given by
a series involving the derivatives of $\f$-symbol of $A$.

Obviously, the same formulae remain valid under milder assumptions
about the symbols. Thus it should be possible to introduce symbol
classes associated with the phase function $\f$ and develop a
symbolic calculus in these classes (as was done in \cite{Sa2} for the
special phase function $\f_\tau$ generated by a linear connection).

Such a general approach may allow one to extend results of Section
\ref{S:proj} to other  elliptic operators. It may also be useful for
the study of solutions of hypoelliptic equations and operators on
noncompact manifolds.


\begin{thebibliography}{BNPW}

\bibitem[Ba]{Ba}
F. Baldus, {\it An approach to a version of the
$S(M,g)$-pseudodifferential calculus on manifolds\/}, Oper. Theory
Adv. Appl., {\bf 145} (2003), 207–-248.

\bibitem[Bo]{Bo}
J. Bokobza-Haggiag, {\it Operateurs pseudodiff\'erentiels sur une
vari\'et\'e diff\'erentiable\/}, Ann. Inst. Fourier. {\bf 19}
(1969), 125--177.

\bibitem[BNPW]{BNPW}
M. Bordemann, N. Neumaier, M.J. Pflaum and S. Waldmann, {\it On
representations of star product algebras over cotangent spaces on
Hermitian line bundles\/}, J. Funct. Anal. {\bf 199} (2003), no. 1,
1–-47.

\bibitem[Dr]{Dr}
L. Drager, {\it On the intrinsic symbol calculus for
pseudodifferential operators on manifolds\/}. PhD Dissertation,
Brandeis University, 1978.

\bibitem[Fu]{Fu}
S. Fulling, {\it Pseudodiffrential operators, covariant
quantization, the inescapable Van Vleck-Morette determinant, and the
$\frac R6$ controversy\/}, Internat. J. Modern Phys. D5 (1996),
597--608.

\bibitem[FK]{FK}
S. Fulling and G. Kennedy, {\it The resolvent parametrix of the
general elliptic linear differential operator:  a closed form for
the intrinsic symbol\/}, Trans. Amer. Math. Soc. {\bf 310} (1988),
583--617.

\bibitem[Ga]{Ga}
A.V. Gavrilov, {\it The double exponential map and covariant
derivation \/}, Sib.  Math. J. {\bf 48}, No. 1 (2007), 56--61.

\bibitem[Gu]{Gu}
K.S. G\"unt\"urk, {\it Covariant Weyl quantization, symbolic
calculus and the product formula\/}. Ph.D. Dissertation, Texas A\& M
University, 2006.

\bibitem[H1]{H1}
L. H\"ormander, {\it Weyl calculus of pseudodifferential
operators\/}, Comm. Pure Appl. Math. {\bf 32} (1979), no. 3,
360–-444.

\bibitem[H2]{H2}
L. H\"ormander, {\it The analysis of linear partial differential
operators\/}, Volume III. Springer-Verlag, New York, 1984.

\bibitem[LSV]{LSV}
A. Laptev, Y. Safarov and D. Vassiliev, {\it On global
parametrization  of Lagrangian distributions and the Solutions of
Hyperbolic Equations\/}, Comm. Pure Appl. Math. {\bf XLVII}, No 11
(1994), 1411--1456.

\bibitem[KN]{KN}
S. Kobayashi and K. Nomizu,
{\it Foundations of differential geometry\/},
Interscience Publishers, New York, 1963.

\bibitem[Le]{Le}
C. Levy, {\it Pseudodifferential operators on manifolds with
linearization\/}. Preprint, 66pp.
http://xxx.soton.ac.uk/abs/0811.1667

\bibitem[McK]{McK}
A.P. McKeag, {\it Approximate spectral projection operator of the
Laplacian on a compact Riemannian manifold\/}. Ph.D. Dissertation,
King's College London, 2006.

\bibitem[Pf1]{Pf1}
M.J. Pflaum, {\it The normal symbol on Riemannian manifolds\/}, New
York J. Math. {\bf 4} (1998), 97-125.

\bibitem[Pf2]{Pf2}
\bysame, {\it A deformation theoretical approach to Weyl
quantization on Riemannian manifolds\/}, Lett. Math. Physics {\bf
45} (1998), 277--294.

\bibitem[RT]{RT}
M. Ruzhansky and V. Turunen, {\it Pseudo-differential operators and
symmetries\/}, Birkh\"auser, 2009.


\bibitem[Sa1]{Sa1}
Y. Safarov, {\it Functions of the Laplace--Beltrami operator\/},
Journees "Equations aux derivees parielles", Proceedings of the
conference at Saint-Jean-De-Monts (1996), pp. XV-1--XV-8.

\bibitem[Sa2]{Sa2}
\bysame, {\it Pseudodifferential operators and linear
connections\/}, London Math. Soc., 74 (1997), 379--417.

\bibitem[SV]{SV}
Yu. Safarov and D. Vassiliev, {\it The asymptotic distribution of
eigenvalues of partial differential operators}, American
Mathematical Society, 1996.

\bibitem[S1]{S1}
V. Sharafutdinov, {\it Geometric symbolic calculus for
pseudodifferential operators. I\/}, Mat. Tr. {\bf 7}, No. 2 (2004),
159--206 (Russian). English translation Sib. Adv. Math. {\bf 15},
No. 3 (2005), 81--125

\bibitem[S2]{S2}
V. Sharafutdinov, {\it Geometric symbolic calculus for
pseudodifferential operators. II\/}, Mat. Tr. {\bf 8}, No. 1 (2005),
176--201. (Russian). English translation Sib. Adv. Math. {\bf 15},
No. 4 (2005), 71--95.

\bibitem[Sha]{Sha}
E. Shargorodsky, {\it Semi-elliptic operators generated by vector
fields\/}, Dissertationes Math. (Rozprawy Mat.) {\bf 409} (2002), 96
pp.

\bibitem[Shu]{Shu}
M. Shubin, {\it Pseudodifferential operators and spectral theory\/},
Nauka, Moscow, 1978 (Russian). English transl. Springer-Verlag,
1987.

\bibitem[Ta]{Ta}
M. Taylor, {\it Pseudodifferential operators\/}, Princeton Univ.
Press, Princeton, New Jersey, 1981.

\bibitem[Tr]{Tr}
F. Treves, {\it Introduction to pseudodifferential and Fourier
integral operators\/}, Plenum Press, New York and London, 1980.

\bibitem[Vo]{Vo}
T. Voronov, {\it Quantization of forms on the cotangent bundle\/},
Comm. Math. Phys. {\bf 205} (1999), no. 2, 315–-336.

\bibitem[Wi1]{Wi1}
H. Widom, {\it Families of pseudodifferential operators\/}, Topics
in Funct. Anal., Adv. in Math. Supl. Stud. {\bf 3} (1978), 345--395.

\bibitem[Wi2]{Wi2}
\bysame {\it A complete symbolic calculus for pseudodifferential
operators\/}, Bull. Sci. Math. {\bf 104} (1980), 19--63.

\end{thebibliography}
\end{document}